\newtheorem{theorem}{Theorem}[section]
\newtheorem{lemma}[theorem]{Lemma}
\newtheorem{proposition}[theorem]{Proposition}
\newtheorem{remark}[theorem]{Remark}
\newtheorem{assumption}{Assumption}
\newtheorem{example}[theorem]{Example}
\newcommand{\setdef}[2]{\{#1 \; : \; #2\}}
\newcommand{\until}[1]{\{1,\dots,#1\}}
\newcommand{\Kc}{\mathcal{K}}
\newcommand{\Lc}{\mathcal{L}}
\newcommand{\Yc}{\mathcal{Y}}
\newcommand{\Zc}{\mathcal{Z}}
\newcommand{\real}{\mathbb{R}}
\newcommand{\Fc}{\mathcal{F}}
\newcommand{\Vc}{\mathcal{V}}
\newcommand{\Pc}{\mathcal{P}}
\newcommand{\Ic}{\mathcal{I}}
\newcommand{\Nc}{\mathcal{N}}
\newcommand{\Gc}{\mathcal{G}}
\newcommand{\Ec}{\mathcal{E}}
\newcommand{\argmin}[2] {\mathrm{arg}\min_{#1}#2}
\DeclareSymbolFont{bbold}{U}{bbold}{m}{n}
\DeclareSymbolFontAlphabet{\mathbbold}{bbold}
\newcommand{\norm}[1]{\left\lVert#1\right\rVert}
\newcommand\oprocendsymbol{\hbox{$\bullet$}}
\newcommand\oprocend{\relax\ifmmode\else\unskip\hfill\fi\oprocendsymbol}
\newcommand*{\QEDA}{\hfill\ensuremath{\blacksquare}}%
\newcommand\xqed[1]{%
  \leavevmode\unskip\penalty9999 \hbox{}\nobreak\hfill
  \quad\hbox{#1}}
\newcommand\demo{\xqed{$\bullet$}}
\newcounter{countitems}
\newcounter{nextitemizecount}
\newcommand{\setupcountitems}{%
  \stepcounter{nextitemizecount}%
  \setcounter{countitems}{0}%
  \preto\item{\stepcounter{countitems}}%
}
\newcommand{\computecountitems}{%
  \edef\@currentlabel{\number\c@countitems}%
  \label{countitems@\number\numexpr\value{nextitemizecount}-1\relax}%
}
\newcommand{\nextitemizecount}{%
  \getrefnumber{countitems@\number\c@nextitemizecount}%
}
\newcommand{\previtemizecount}{%
  \getrefnumber{countitems@\number\numexpr\value{nextitemizecount}-1\relax}%
}
\computecountitems\ifnumcomp{\previtemizecount}{>}{4}{\end{multicols}}{}}
\newcommand{\longthmtitle}[1]{\mbox{}\emph{(#1):}}
\newcommand{\comment}[1]{} 
\newcolumntype{P}[1]{>{\centering\arraybackslash}p{#1}}
\begin{document}
\title{\bf Distributed and Anytime Algorithm for Network Optimization
  Problems with Separable Structure\thanks{This work was partially
    supported by ARL award ARL-W911NF-22-2-0231.}}

\author{Pol Mestres \qquad Jorge Cort{\'e}s \thanks{P. Mestres and
    J. Cort\'es are with the Department of Mechanical and Aerospace
    Engineering, University of California, San Diego,
    \{pomestre,cortes\}@ucsd.edu}%
}

\maketitle

\begin{abstract}
  This paper considers the problem of designing a dynamical system to
  solve constrained optimization problems in a distributed way and in an
  anytime fashion (i.e., such that the feasible set is forward
  invariant). For problems with separable objective function and
  constraints, we design an algorithm with the desired properties and
  establish its convergence. Simulations illustrate our results.
\end{abstract}

\section{Introduction}\label{sec:introduction}

%
%
Distributed optimization methods are a popular tool for
solving several engineering problems like parameter estimation,
resource allocation in communication networks, source localization,
etc. In several of these applications, feedback controllers are often
implemented as the solution of such problems on a physical plant. In this context, the safe operation requirements of the system are encoded as constraints of the optimization problem. This approach is very versatile, but the implementation of such controllers faces several challenges.
%
%
On the one hand, the algorithm solving the optimization problem may be terminated at any time, and hence
feasibility must be maintained throughout its execution. We refer to an algorithm with this property as anytime. Moreover, it must retain its distributed and scalable character, so that each agent can implement it by communicating exclusively with its neighbors and can do so in an efficient manner independently of the size of the network. Designing algorithms that combine all these objectives together and are anytime, distributed, scalable and have provable convergence guarantees is a challenging problem.
%

\subsubsection*{Literature Review}
%
%
In this work we take the viewpoint of optimization algorithms as continuous-time dynamical systems (cf.~\cite{RWB:91,UH-JBM:94}), which has recently proven to be a very powerful paradigm in applications where the optimization problem is in a feedback loop with a plant~\cite{AH-SB-GH-FD:21}.
The problem of designing distributed algorithms for constrained
optimization is well studied in the
literature. The survey paper~\cite{AN-JL:18} covers this topic exhaustively. Of particular interest to us are primal-dual and projected saddle-point dynamics~\cite{KA-LH-HU:58}, which define dynamical systems that solve constrained optimization problems.
%
%
Although these works have provable convergence guarantees, the
trajectories generated by the proposed dynamical systems are in general not
guaranteed to be feasible throughout its execution.
This anytime property is well-studied in the optimization literature and dates back to the mid-1980s (cf.~\cite{TD:87}) in the context of time-dependent planning.
Our work here is related to~\cite{YSX-JW:00,MM-MIJ:22,AA-JC:24-tac}, which design dynamical systems that solve nonlinear programs in continuous time with the anytime property. However, the literature on anytime algorithms generally is not concerned in making them amenable to a distributed implementation. 
An exception is the relaxed economic dispatch problem, which involves a convex
separable objective function and globally coupling affine equality
constraints. For this problem,~\cite{AC-JC:15-tcns} gives a
distributed anytime algorithm that converges to the global optimum.
%
%
The problem of designing distributed anytime algorithms for
constrained optimization problems is very relevant in the control
barrier function (CBF) literature~\cite{ADA-SC-ME-GN-KS-PT:19,ADA-XX-JWG-PT:17}. 
This is because CBF-based controllers for multiagent systems can be obtained as the solution of a network optimization problem, where the system is guaranteed to be safe only if its constraints are satisfied at all times.
%
%
The works~\cite{UB-LW-ADA-ME:15,LW-ADA-ME:17,MJ-MS:21} tackle this problem for CBF-based quadratic programs (QPs), where a centralized QP is split into local QPs whose solution is guaranteed to preserve the safety constraints at all times.
%
%
However, the solution of these local QPs might be suboptimal with
respect to the centralized QP. The recent work~\cite{XT-DVD:22}
designs a distributed algorithm that is guaranteed to satisfy the
constraints of the CBF-based QP at all times
and converge to its state-dependent optimizer in finite time. However,
their algorithm is restricted to a limited
%
%
class of quadratic programs and plant dynamics and is not easily
generalizable to general convex programs.
%
%

\subsubsection*{Statement of Contributions}
%
%

In this paper we introduce a continuous-time dynamical system to solve
convex optimization problems with separable objective function and
constraints in a distributed and anytime fashion.  The constraints
couple the decision variables of all agents and this poses a
difficulty in the design of distributed algorithms that solve such
problems.  We first show that the separable structure permits the
intoduction of auxiliary variables to reformulate the original problem
into one with local constraints while still preserving the same
solution set. However, this reformulation still does not allow to
fully decouple the optimization problem into one per agent because the
auxiliary variables require coordination.  In order to sort this
hurdle, our technical approach constructs a dynamical system by
combining the use of projected saddle-point dynamics,
%
%
which are distributed but not anytime, and the safe gradient flow,
which is anytime but not distributed.
%
%
First, we establish the well-posedness
of the proposed dynamical system.
%
%
Second, we show that it is distributed, exhibits the anytime property
and is scalable.
%
%
Finally, we prove that all trajectories with feasible initial
condition converge to a neighborhood of the optimizer, which can be
made arbitrarily small by tuning a design parameter
accordingly. Moreover, in the case where the feasible set is bounded,
we show that all trajectories with feasible initial condition exactly
converge to the optimizer.
%
%
For reasons of space, proofs are omitted and will appear elsewhere.

\section{Preliminaries}\label{sec:prelims}

This section presents background on dynamical
systems that solve constrained optimization problems\footnote{ Throughout the paper we denote by $\real$ and $\real_{>0}$ the set of real and nonnegative
real numbers, respectively. Given $x\in\real^{n}$, $\norm{x}$ denotes
its Euclidean norm. For $a\in\real$ and $b\in\real_{\geq0}$, we let
\begin{align*}
  [a]_b^+ = \begin{cases}
    a, \quad &\text{if} \ b>0, \\
    \max\{0,a\}, \quad &\text{if} \ b=0.
\end{cases}
\end{align*}
For vectors $a\in\real^{n}$ and $b\in\real^{n}_{\geq0}$, $[a]_{b}^{+}$
denotes the vector whose $i$-th component is $[a_i]_{b_i}^{+}$, for
$i\in\{1,\dots,n\}$. We also write
$\textbf{0}_{n}=(0,\dots,0)\in\real^{n}$.  For a real-valued function
$F:\real^{n}\times\real^{m}\to\real$, we denote by $\nabla_{x}F$ and
$\nabla_{y}F$ the column vector of partial derivatives of $F$ with
respect to the first and second arguments, respectively.  Given a set
of functions $g^{1},\dots,g^{k}$, we let
$I_{g^1,\dots,g^k}(x)=\setdef{1\leq i\leq k}{g^i(x)=0}$.  For
matrices $A\in\real^{n\times m}$ and $B\in\real^{p\times q}$, we let
$A \otimes B$ denote their Kronecker product. Given a set
$\Pc\subseteq\real^{n}$ and a set of variables
$\xi=\{ x_{i_1},x_{i_2},\dots,x_{i_k} \}$, we denote by
$\Pi_{\xi}\Pc=\setdef{(x_{i_1},x_{i_2},\dots,x_{i_k})\in\real^k}{x\in\Pc}$
the projection of $\Pc$ onto the $\xi$ variables.  Given sets
$S_{1}, \dots, S_k$, $\bigtimes_{i=1}^{k}S_{i}$ denotes their
Cartesian product. An undirected graph is a pair $\Gc=(V,\Ec)$, where $V=\until{N}$ is a
finite set called the vertex set, $\Ec\subseteq V\times V$ is called
the edge set where $(i,j)\in\Ec$ if and only if $(j,i)\in\Ec$. The set
of neighbors of node $i$ is denoted by
$\Nc_{i}=\setdef{j\in\Ic}{(i,j)\in\Ec}$. The adjacency matrix
$A\in\real^{|V|\times|V|}_{\geq0}$ of the graph $\Gc$ satisfies the
property $[A]_{i,j}=[A]_{j,i}=1$ if $(i,j)\in\Ec$ and $[A]_{i,j}=0$
otherwise. The degree matrix $D$ of $\Gc$ is the diagonal matrix
defined by $[D]_{i,i}=|\Nc_i|$ for all $i\in\{1,\dots,|V|\}$. The
Laplacian matrix of $\Gc$ is $L=D-A$. 
Let $F:\real^{n}\to\real^{n}$ be a locally Lipschitz vector field and
consider the dynamical system $\dot{x}=F(x)$, with flow map
$\Phi_{t}:\real^{n}\to\real^{n}$. This means that $\Phi_{t}(x)=x(t)$,
where $x(t)$ is the unique solution of the dynamical system with
$x(0)=x$. A set $\Kc\subseteq\real^{n}$ is (positively) forward
invariant if $x\in\Kc$ implies that $\Phi_{t}(x)\in\Kc$ for all
$t\geq0$. A set $A\subseteq\Kc$ is Lyapunov stable relative to $\Kc$
if, for every open set $U$ containing $A$, there exists an open set
$\tilde{U}$ also containing $A$ such that for all
$x\in\tilde{U}\cap\Kc$, $\Phi_{t}(x)\in U\cap\Kc$ for all $t\geq0$.  A
set $A\subseteq\Kc$ is asymptotically stable relative to $\Kc$ if it
is Lyapunov stable relative to $\Kc$ and there is an open set $U$
containing $A$ such that $\Phi_{t}(x)\to A$ as $t\to\infty$ for
all~$x\in U\cap\Kc$.}.

\subsubsection*{Safe Gradient Flow}
Given continuously differentiable functions $f:\real^{n}\to\real$,
$g:\real^{n}\to\real^{p}$, $h:\real^{n}\to\real^{q}$, consider the
constrained nonlinear program
\begin{align}\label{eq:constrained-opt}
  & \min \limits_{x\in\real^n} f(x),
  \\
  \notag
  \qquad & \text{s.t.} \quad g(x) \leq0, \\
  \notag
  \qquad & \qquad h(x) = 0.
\end{align}
Let $\Fc=\setdef{x\in\real^n}{g(x)\leq0, \ h(x)=0}$ denote the
constraint set and
$ X_{\text{KKT}}=\setdef{x^*\in\real^n}{\exists
  (u^*,v^*)\in\real^p\times\real^q \ \text{such that} \ (x^*,u^*,v^*) \
  \text{is a KKT point of}~\eqref{eq:constrained-opt}}$.  We are
interested in solving the optimization problem with an algorithm that
respects its constraints at all times of its evolution. Given
$\alpha>0$, the \textit{safe gradient flow}, cf.~\cite{AA-JC:24-tac},
is the dynamical system $ \dot{x}=F_{\alpha}(x)$, where
\begin{align}\label{eq:sgf-opt-pb}
  \notag
  F_{\alpha}(x)
  &= \argmin{\xi\in\real^n}{\frac{1}{2}\norm{\xi+\nabla
    f(x)}^2}
  \\
  & \text{s.t.} \quad \frac{\partial g(x)}{\partial x}\xi \leq -\alpha
    g(x),
  \\
  \notag
  & \qquad \frac{\partial h(x)}{\partial x}\xi = -\alpha h(x).
\end{align}
The direction prescribed by $ F_{\alpha}$ can be interpreted as that
closest to the gradient descent direction $-\nabla f$ while ensuring
that the constraints defining $\Fc$ are not violated.  The next result
gathers important properties of the safe gradient flow.

\begin{proposition}\longthmtitle{Properties of the safe gradient
    flow~\cite[Proposition 5.1, Proposition 5.6 and Corollary
    5.9]{AA-JC:24-tac}}\label{properties-sgf}
  Suppose $f$, $g$ and $h$ are continuously differentiable and their
  derivatives are locally Lipschitz.  Then,
  \begin{enumerate}
  \item there exists an open neighborhood $U$ containing $\Fc$ such
    that~\eqref{eq:sgf-opt-pb} is well-defined;
  \item $F_{\alpha}$ is locally Lipschitz on $U$;
  \item the Lagrange multipliers of~\eqref{eq:sgf-opt-pb} are unique
    and locally Lipschitz as a function of $x$ on $U$;
  \item the feasible set $\Fc$ is forward invariant and asymptotically stable
    under~\eqref{eq:sgf-opt-pb};
  \item $F_{\alpha}(x^*)=0$ if and only if $x^{*}\in X_{\text{KKT}}$;
  \item if $x^{*}$ is a strict local minimizer of $f$ and an isolated
    equilibrium of the safe gradient flow, then $x^{*}$ is
    asymptotically stable relative to $\Fc$.
  \end{enumerate}
\end{proposition}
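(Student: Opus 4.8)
The plan is to handle the six claims in groups, since they build on one another; the only genuine input needed beyond the stated smoothness is a constraint qualification such as LICQ along $\Fc$.

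\emph{Well-posedness and regularity, (i)--(iii).} For each fixed $x$, the program in \eqref{eq:sgf-opt-pb} is a strictly convex quadratic program in $\xi$ (its objective has Hessian equal to the identity), so whenever it is feasible its minimizer is unique and equals $F_\alpha(x)$. At any $x\in\Fc$ one has $g(x)\le 0$ and $h(x)=0$, so $\xi=0$ satisfies $\frac{\partial g(x)}{\partial x}\cdot 0 = 0 \le -\alpha g(x)$ and $\frac{\partial h(x)}{\partial x}\cdot 0 = 0 = -\alpha h(x)$; thus \eqref{eq:sgf-opt-pb} is feasible on $\Fc$. Assuming LICQ (or MFCQ) for \eqref{eq:constrained-opt} along $\Fc$, the same qualification for the feasible set of \eqref{eq:sgf-opt-pb} holds, by continuity of $g,h$ and their Jacobians, on an open neighborhood $U$ of $\Fc$, which gives (i). Items (ii) and (iii) then follow from standard parametric sensitivity for strictly convex QPs: under LICQ the primal--dual solution map of \eqref{eq:sgf-opt-pb} depends in a locally Lipschitz way on the data, and since $\nabla f$, $g$, $h$ and $\partial g/\partial x$, $\partial h/\partial x$ are locally Lipschitz, so are $F_\alpha$ and the associated multipliers on $U$.

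\emph{Invariance and stability of $\Fc$, (iv).} This is the barrier heart of the result. Along any solution $x(\cdot)$ in $U$, the chain rule and the constraints of \eqref{eq:sgf-opt-pb} give, componentwise, $\frac{d}{dt}g^j(x(t)) = \frac{\partial g^j(x(t))}{\partial x}F_\alpha(x(t)) \le -\alpha\, g^j(x(t))$ and $\frac{d}{dt}h(x(t)) = -\alpha\, h(x(t))$. The comparison lemma then yields $g^j(x(t)) \le e^{-\alpha t} g^j(x(0))$ and $h(x(t)) = e^{-\alpha t} h(x(0))$. If $x(0)\in\Fc$ this forces $g(x(t))\le 0$ and $h(x(t))=0$ for all $t\ge 0$, so $\Fc$ is forward invariant; for $x(0)\in U$ near $\Fc$ the same bounds show the constraint residuals decay exponentially, and together with local boundedness of trajectories (so they remain in $U$) this yields attractivity of $\Fc$, while the explicit estimate yields Lyapunov stability. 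One subtlety to track is that \eqref{eq:sgf-opt-pb} uses the plain right-hand side $-\alpha g(x)$, so when some $g^j(x)>0$ the flow is pushed strictly inward; the argument should follow the active/inactive constraints accordingly.

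\emph{Equilibria and local asymptotic stability, (v)--(vi).} If $x^*\notin\Fc$ then $\xi=0$ violates one of the constraints of \eqref{eq:sgf-opt-pb} at $x^*$, so $F_\alpha(x^*)\neq 0$; hence every equilibrium lies in $\Fc$. For $x^*\in\Fc$, $\xi=0$ is feasible, and writing the KKT system of \eqref{eq:sgf-opt-pb} at $\xi=0$ --- stationarity $\nabla f(x^*) + \big(\tfrac{\partial g(x^*)}{\partial x}\big)^{\!\transpose}\lambda + \big(\tfrac{\partial h(x^*)}{\partial x}\big)^{\!\transpose}\mu = 0$ with $\lambda\ge 0$, together with $\lambda_j\,\alpha g^j(x^*)=0$ --- one sees it is satisfied by some $(\lambda,\mu)$ exactly when $x^*\in X_{\text{KKT}}$ (using $\alpha g^j(x^*)=0 \iff g^j(x^*)=0$); since \eqref{eq:sgf-opt-pb} is convex, $\xi=0$ solves it iff its KKT system holds, giving (v). For (vi), feasibility of $\xi=0$ and optimality of $F_\alpha(x)$ give $\norm{F_\alpha(x)+\nabla f(x)}^2 \le \norm{\nabla f(x)}^2$, i.e. $\nabla f(x)^{\transpose}F_\alpha(x) \le -\tfrac12\norm{F_\alpha(x)}^2 \le 0$, so $V(x)=f(x)-f(x^*)$ satisfies $\dot V = \nabla f(x)^{\transpose}F_\alpha(x)\le 0$ along trajectories in $\Fc$, with $\dot V=0$ only where $F_\alpha(x)=0$; strictness of the minimizer makes $V$ positive definite near $x^*$ on $\Fc$, and since $x^*$ is an isolated equilibrium, LaSalle's invariance principle relative to $\Fc$ yields asymptotic stability relative to $\Fc$.

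The step I expect to be the main obstacle is the first group: identifying the precise constraint qualification that must hold on $\Fc$, showing it propagates to an open neighborhood, and invoking the correct parametric-QP sensitivity result to get local Lipschitzness of both $F_\alpha$ \emph{and} the multipliers. Once that regularity is in hand, the barrier estimate for (iv) and the KKT matching in (v)--(vi) are comparatively mechanical.
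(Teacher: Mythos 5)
This proposition is not proved in the paper---it is imported verbatim from~\cite[Proposition 5.1, Proposition 5.6, Corollary 5.9]{AA-JC:24-tac}---and your reconstruction follows essentially the same route as those original proofs: feasibility of $\xi=0$ on $\Fc$ plus a constraint qualification propagated to a neighborhood and parametric-QP sensitivity for (i)--(iii), the comparison-lemma barrier estimate $\tfrac{d}{dt}g^j\le-\alpha g^j$, $\tfrac{d}{dt}h=-\alpha h$ for (iv), matching the QP's KKT system at $\xi=0$ with that of~\eqref{eq:constrained-opt} for (v), and $f$ as a Lyapunov function with $\nabla f(x)^{\transpose}F_{\alpha}(x)\le-\tfrac12\norm{F_{\alpha}(x)}^2$ plus LaSalle relative to $\Fc$ for (vi). Your one caveat is also the right one: as stated here the proposition silently omits the constraint qualification (MFCQ/LICQ along $\Fc$, cf.\ Assumption~\ref{as:licq}) that the cited results require for well-posedness and for uniqueness and Lipschitzness of the multipliers, so adding it as a hypothesis, as you do, is exactly what is needed.
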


\subsubsection*{Projected Saddle-Point Dynamics}
We recall here the notion of projected saddle-point dynamics
following~\cite{AC-EM-SHL-JC:18-tac}. Consider again the optimization
problem~\eqref{eq:constrained-opt}, with continuously differentiable
functions $f$, $g$ and $h$ whose derivatives are locally Lipschitz,
and let $\Lc:\real^{n}\times\real^{m}_{\geq0}\times\real^{p}$ be the
associated Lagrangian,
\begin{align*}
  \Lc(x,\lambda,\mu)=f(x)+\lambda^{T}g(x)+\mu^{T} h(x).
\end{align*}
We define the \textit{projected saddle-point dynamics} for $\Lc$ as:
\begin{subequations}
  \begin{align}
    \dot{x} &= -\nabla_x \Lc(x,\lambda,\mu),
    \\
    \dot{\lambda} &= [\nabla_{\lambda}
                    \Lc(x,\lambda,\mu)]_{\lambda}^{+},
    \\
    \dot{\mu} &= \nabla_{\mu} \Lc(x,\lambda,\mu).
  \end{align}
  \label{eq:projected-saddle-point-dyn}
\end{subequations}
If $f$ is strongly convex, $g$ is convex and $h$ is affine, the saddle
point of $\Lc$ is unique and corresponds to the KKT point
of~\eqref{eq:constrained-opt}.  Moreover,~\cite[Theorem
5.1]{AC-EM-SHL-JC:18-tac} ensures that the saddle point of $\Lc$ is
globally asymptotically stable under the
dynamics~\eqref{eq:projected-saddle-point-dyn}.

\section{Problem Statement}\label{sec:problem-statement}
Consider a network composed by agents $\until{N}$ whose communication
topology is described by a connected undirected graph $\Gc$. An edge
$(i,j)$ represents the fact that agent $i$ can receive information
from agent $j$ and vice versa. We refer to an algorithm run by the
network as \emph{distributed} if each agent can execute it with the
information available to it and its neighbors.

For each $i\in \{1,\dots,N\}$, $k\in\{1,\dots,p\}$,
$l\in\{1,\dots,q\}$ let $f_{i}:\real^{n}\to\real$ be a strongly convex
and continuously differentiable function with locally Lipschitz
derivatives, $g_{i}^k:\real^{n}\to\real$ be a convex and continuously
differentiable function with locally Lipschitz derivatives and
$h_{i}^{l}:\real^{n}\to\real$ be an affine function. We let
$x=[x_1,\dots,x_N]\in\real^{nN}$. Consider the following optimization
problem with separable objective function and constraints:
\begin{align}\label{eq:distr-opt-pb}
  & \min \limits_{ x \in\real^{nN} } \sum_{i=1}^N f_i(x_i),
  \\
  \notag
  \qquad & \text{s.t.} \quad \sum_{i=1}^N g_i^{k}(x_i) \leq0, \quad
           k\in\{1,\dots,p\},
  \\
  \notag
  \qquad & \qquad \sum_{i=1}^N h_i^l(x_i) = 0, \quad l\in\{1,\dots,q\}.
\end{align}
Since the objective function is strongly convex and the feasible set
is convex, this program has a unique optimizer~$x^{*}$.  Note that,
even though the objective function is separable, the structure of the
constraints couples the decision variables of the agents. This makes
challenging the design of distributed algorithmic solutions
of~\eqref{eq:distr-opt-pb}.

\begin{remark}\longthmtitle{Separability structure}
  Problems of the form~\eqref{eq:distr-opt-pb} arise in multiple
  applications, including communications~\cite{FPK-AKM-DKHT:98},
  economic dispatch of power systems~\cite{AC-JC:15-tcns}, optimal
  power flow~\cite{TE:14},
  resource allocation~\cite{LX-SB:06},
  and safe swarm behavior using control barrier
  functions~\cite{UB-LW-ADA-ME:15}. 
  Also, given convex sets $X_i$, $i\in\{1,\dots,N\}$, a common
  problem considered in the distributed optimization
  literature~\cite{AN-AO-PAP:10} is
  \begin{align*}
    & \min \limits_{ x\in\real^n } \sum_{i=1}^N f_i(x),
    \\
    \notag
    \qquad & \text{s.t.} \quad x\in\cap_{i=1}^N X_i.
  \end{align*}
  When
  $X_{i}=\setdef{x\in\real^n}{\bar{g}_i(x)\leq0}\subseteq\real^{n}$
  for a continuously differentiable convex function with locally Lipschitz derivatives
  $\bar{g}_{i}:\real^{n}\to\real^{m_i}$, for $i\in\{1,\dots,N\}$, the
  optimization can be reformulated as
  \begin{align*}
    & \min \limits_{ x\in\real^{nN} } \sum_{i=1}^N f_i(x_i),
    \\
    \notag
    \qquad & \text{s.t.} \quad (L \otimes I_n ) x = \textbf{0}_{Nn} \quad \bar{g}_i(x_i)\leq0, \
             i\in\{1,\dots,N\},
  \end{align*}
  which is of the form~\eqref{eq:distr-opt-pb}.  \demo
\end{remark}

Throughout the paper, we denote $f(x) = \sum_{i=1}^N f_i(x_i)$,
$g^k(x) = \sum_{i=1}^N g_i^{k}(x_i)$ for $k\in\{1,\dots,p\}$ and
$h^{l}(x)=\sum_{i=1}^{N} h_{i}^{l}(x_i)$ for $l\in\{1,\dots,q\}$, and
write the feasible set of~\eqref{eq:distr-opt-pb} as
\begin{align*}
  \Fc = \setdef{ x \in\real^{nN}}{
  & g^k(x)\leq0, \ \forall
    k\in\{1,\dots,p\},
  \\
  & h^l(x) = 0, \ \forall l\in\{1,\dots,q\} }.
\end{align*}

We also make the following assumption.

\begin{assumption}\longthmtitle{Linear independence constraint
    qualification for separable constraints}\label{as:licq}
  For all $x\in\real^{nN}$, the vectors
  $\{\nabla g^k(x)\}_{k\in I_{g^1,\dots,g^p}(x)} \cup \{\nabla
  h^l(x)\}_{1\leq l \leq q}$ are linearly independent.
\end{assumption}

Assumption~\ref{as:licq} is common and guarantees that the KKT
conditions are necessary and sufficient for the optimality
of~\eqref{eq:distr-opt-pb}.

Our goal is to design an algorithm, in the form of a locally Lipschitz
dynamical system, such that
\begin{enumerate}
\item is \emph{distributed}, i.e., each agent can execute it with
  locally available information;
\item is \emph{anytime}, i.e., the feasible set $\Fc$ is forward
  invariant;
\item \emph{solves}~\eqref{eq:distr-opt-pb}, i.e., all trajectories
  starting in $\Fc$ converge to its optimizer.
\end{enumerate}
Even though algorithmic solutions exist in the literature that enjoy
some of these properties (e.g., the projected saddle-point dynamics
enjoys (i) and (iii) for certain classes of optimization problems),
the design of an algorithm that enjoys all three is challenging.

\section{Design of Algorithmic Solution}\label{sec:separable}
Here we propose an algorithmic solution to the constrained
program~\eqref{eq:distr-opt-pb} to meet the requirements stated in
Section~\ref{sec:problem-statement}.  Our exposition proceeds by first
reformulating the optimization problem and then building on the
projected saddle-point dynamics and the safe gradient flow to
synthesize a coordination algorithm with the desired properties.

\subsection{Reformulation using constraint mismatch variables}

In this section we provide an equivalent formulation
of~\eqref{eq:distr-opt-pb} that addresses the coupling among the
agents' decision variables arising from the structure of the
constraints.  The basic idea to ``decouple'' them is to introduce,
following~\cite{AC-JC:16-allerton}, \emph{constraint-mismatch
  variables} which help agents keep track of local constraints while
collectively satisfying the original constraints. Formally, to the
state of each agent, we add one variable per constraint: $y_{i}^k$ for
agent $i$ and the $k$th inequality constraint and $z_{j}^l$ for agent
$j$ and the $l$th equality constraint.  For convenience, we use the
notation $x=[x_1,\dots,x_N]$, $y_i=[y_i^1,\dots,y_i^p]$,
$z_{i}=[z_i^1,\dots,z_i^p]$, $y=[y_1,\dots,y_N]$, $z=[z_1,\dots,z_N]$.
Consider then the following problem
\begin{align}\label{eq:distr-opt-pb-y}
  & \min \limits_{ x \in\real^{nN}, y \in\real^{Np}, z\in\real^{Nq} } \sum_{i=1}^N f_i(x_i),
  \\
  \notag
  & \qquad \text{s.t.} \quad g_i^k(x_i)+\sum_{j\in
    \Nc_i}(y_i^k-y_j^k)\leq0, \quad k\in\{1,\dots,p\}
  \\ 
  \notag
  & \qquad \qquad \quad h_i^l(x_i)+\sum_{j\in\Nc_i}(z_i^k-z_j^k)=0, \quad l\in\{1,\dots,q\},
\end{align}
and constraints for all $i\in\{1,\dots,N\}$.
Note that, in this formulation, constraints are now locally
expressible, meaning that agent $i \in \until{N}$ can evaluate the
ones corresponding to $g_i^k$ and $h_i^l$ with information from its
neighbors.  Let $\mu_{i}=[\mu_i^1,\dots,\mu_i^p]$,
$\lambda_{i}=[\lambda_i^1,\dots,\lambda_i^p]$,
$\lambda=[\lambda_1,\dots,\lambda_N]$ and $\mu=[\mu_1,\dots,\mu_N]$
be the Lagrange multipliers for the constraints
in~\eqref{eq:distr-opt-pb-y}. 
%
%
We next show that
the optimizer in $x$ of~\eqref{eq:distr-opt-pb-y} is~$x^{*}$, the
optimizer of~\eqref{eq:distr-opt-pb}.

\begin{proposition}\longthmtitle{Equivalence between the two
    formulations}\label{prop:equivalence-constraint}
  Let $\Fc_{r}^{*}$ be the solution set
  of~\eqref{eq:distr-opt-pb-y}. Then, $x^{*}=\Pi_{x}(\Fc_r^{*})$.
\end{proposition}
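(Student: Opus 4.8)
The plan is to show the two optimization problems \eqref{eq:distr-opt-pb} and \eqref{eq:distr-opt-pb-y} have the same optimal value and that any $x$-component of a solution of the reformulated problem is feasible (hence optimal) for the original one, and conversely. The key observation is that the constraint-mismatch terms telescope when summed over all agents: for each inequality index $k$,
\begin{align*}
  \sum_{i=1}^N \Bigl( g_i^k(x_i) + \sum_{j\in\Nc_i}(y_i^k - y_j^k) \Bigr)
  = \sum_{i=1}^N g_i^k(x_i) + \vectorones[N]^{\transpose}(L\otimes 1) y^k = g^k(x),
\end{align*}
since the all-ones vector is in the kernel of the (symmetric) Laplacian $L$ of the connected graph $\Gc$; the same identity holds for the equality constraints with $z$ in place of $y$.

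First I would prove the inclusion "$\subseteq$'': given $(x,y,z)$ feasible for \eqref{eq:distr-opt-pb-y}, summing the $i$-indexed inequality constraints and using the telescoping identity above yields $g^k(x)\le 0$ for all $k$, and summing the equality constraints yields $h^l(x)=0$ for all $l$; hence $x\in\Fc$. Therefore the optimal value of \eqref{eq:distr-opt-pb-y} is at least that of \eqref{eq:distr-opt-pb}. For the reverse inequality, given the optimizer $x^*$ of \eqref{eq:distr-opt-pb}, I would exhibit $(y,z)$ making $(x^*,y,z)$ feasible for \eqref{eq:distr-opt-pb-y}: one needs $\sum_{j\in\Nc_i}(y_i^k-y_j^k) \le -g_i^k(x_i^*)$ for each $i$, i.e. $(L\otimes 1)y^k \le -(g_1^k(x_1^*),\dots,g_N^k(x_N^*))^{\transpose}$, where the aggregate right-hand side has nonpositive sum. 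Since the graph is connected, the range of $L$ is exactly $\vectorones[N]^{\perp}$, so the equality $(L\otimes 1)z^l = -(h_1^l(x_1^*),\dots,h_N^l(x_N^*))^{\transpose}$ is solvable for $z^l$ precisely because $\sum_i h_i^l(x_i^*)=h^l(x^*)=0$. For the inequality constraints, I can likewise first solve $Ly^k = -(g_1^k(x_1^*),\dots,g_N^k(x_N^*))^{\transpose} + \tfrac{g^k(x^*)}{N}\vectorones[N]$ (solvable since the right-hand side is now orthogonal to $\vectorones[N]$), which makes $\sum_{j\in\Nc_i}(y_i^k-y_j^k) = -g_i^k(x_i^*) + g^k(x^*)/N \le -g_i^k(x_i^*)$ because $g^k(x^*)\le 0$; so $(x^*,y,z)$ is feasible for \eqref{eq:distr-opt-pb-y} with the same objective value. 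This gives equality of optimal values, and combined with the first inclusion shows $\Pi_x(\Fc_r^*)\subseteq\{x^*\}$; the constructed feasible point shows $x^*\in\Pi_x(\Fc_r^*)$, so $\Pi_x(\Fc_r^*)=\{x^*\}=x^*$ (identifying a singleton with its element, and using uniqueness of $x^*$ from strong convexity).

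I do not anticipate a serious obstacle: the argument is essentially the solvability of linear systems governed by $\Image L = \Ker L^{\perp}$ for a connected graph, together with the telescoping identity. The one point requiring mild care is the inequality direction above — ensuring that the chosen $y^k$ yields feasibility rather than merely the aggregate constraint — which the shift by $g^k(x^*)/N\le 0$ handles. A secondary subtlety is purely notational: the paper writes $z_i=[z_i^1,\dots,z_i^p]$ and uses $z_i^k$ inside the $h_i^l$ constraint, which I read as a typo for $z_i^l$ with $l$ ranging over $\{1,\dots,q\}$; the proof is insensitive to this relabeling.
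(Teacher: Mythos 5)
Your proof is correct and follows essentially the same route as the paper: the forward direction is the same telescoping/summation argument, and your reverse construction --- shifting by $g^k(x^*)/N\,\vectorones[N]$ to land in $\Image L = \vectorones[N]^{\perp}$ and solving the Laplacian system --- is exactly the paper's construction with the equally distributed slack $\breve{s}^k = \tilde{s}^k/N$, just written without introducing explicit slack variables. Your reading of $z_i^k$ as a typo for $z_i^l$ is also the intended one.
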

\begin{proof}
  Note that~\eqref{eq:distr-opt-pb} is equivalent to
  \begin{align}\label{eq:slack}
    \notag
    & \min \limits_{ x\in\real^{nN}, s\in\real^{p} } \sum_{i=1}^N f_i(x_i),
    \\
    \notag
    \qquad & \text{s.t.} \quad \sum_{i=1}^N g_i^k(x_i) + s^k = 0, \
             s^k \geq0,
    \\
    \qquad & \quad \quad \sum_{i=1}^N h_i^l(x_i) = 0,
    \\
    \notag
    \qquad & \quad \quad k\in\{1,\dots,p\}, \ l\in\{1,\dots,q\}.
  \end{align}
  and~\eqref{eq:distr-opt-pb-y} is equivalent to
  \begin{align}\label{eq:cm-slack}
    \notag
    & \min \limits_{ x\in\real^{Nn}, y\in\real^{Np}, s\in\real^{Np},
      z\in\real^{Nq} } \sum_{i=1}^N f_i(x_i), 
    \\
    \notag
    & \qquad \text{s.t.} \quad g_i^k(x_i)+\sum_{j\in
      \Nc_i}(y_i^k-y_j^k) +s_i^k=0, \ s_i^k\geq0, \\ 
    & \qquad \quad \quad h_i^l(x_i) + \sum_{j\in\Nc_i}(z_i^l-z_j^l) = 0, \\
    \notag
    &\qquad i \in \{1,\dots,N\}, \ k\in\{1,\dots,p\}, \ l\in\{1,\dots,q\}.
  \end{align}
  Now the proof follows a similar reasoning as the proof
  from~\cite[Proposition 4.2]{AC-JC:16-allerton}. We only need to show
  that the feasible sets of~\eqref{eq:slack} and~\eqref{eq:cm-slack}
  are the same, because their objective functions coincide.  First, if
  $(\hat{x},\hat{y},\hat{s},\hat{z})\in\real^{Nn}\times\real^{Np}\times\real^{Np}\times\real^{Nq}$
  is a feasible point for~\eqref{eq:cm-slack}, then by adding up all
  constraints for $i\in\until{N}$ and letting $\bar{s}^{k}=\sum_{i=1}^{N}\hat{s}_{i}^{k}$, $\bar{s}=[\bar{s}^1,\dots,\bar{s}^p]$ it follows that $(\hat{x},\bar{s})$
  is a feasible point for~\eqref{eq:slack}. Now, let
  $(\tilde{x},\tilde{s})$ be a feasible point
  for~\eqref{eq:slack}. Let
  $v = [g_1^1(\tilde{x}_1), \dots,
  g_i^k(\tilde{x}_N), \dots;
  g_N^p(\tilde{x}_N)]\in\real^{Np}$ and $\breve{s}=[\frac{\tilde{s}^1}{N},\dots,\frac{\tilde{s}^1}{N},\dots,\frac{\tilde{s}^p}{N},\dots,\frac{\tilde{s}^p}{N}]\in\real^{Np}$.
  Note that
  $\textbf{1}_{Np}^{T}(v+\breve{s})=0$. This implies that $v+\breve{s}$ belongs to the range
  space of the Laplacian $L$ of the communication graph and hence
  there exists $\tilde{y}$ such that $-L\tilde{y}=v+\breve{s}$. By a similar
  argument, by letting
  $w = [h_1^1(\tilde{x}_1), \dots, h_i^l(\tilde{x}_N), \dots;
  g_N^p(\tilde{x}_N)]$ there exists $\tilde{z}$ such that
  $-L\tilde{z}=w$. Now it follows that
  $(\tilde{x},\tilde{y},\breve{s},\tilde{z})$ is feasible
  for~\eqref{eq:slack}, hence proving that the feasible sets
  of~\eqref{eq:slack} and~\eqref{eq:cm-slack} are the same.
\end{proof}
\smallskip


Proposition~\ref{prop:equivalence-constraint} implies
that~\eqref{eq:distr-opt-pb-y} has a unique optimizer in the variables
$x$. However, since the objective function
in~\eqref{eq:distr-opt-pb-y} is not strongly convex in $y$ and $z$,
the optimizer in the variables $y$ and $z$ might not be unique. Hence,
for the results that follow, we take $\epsilon>0$ and define
$f_{i}^{\epsilon}(x_i,y_i,z_i) =
f_{i}(x_i)+\frac{\epsilon}{2}\norm{y_{i}}^{2}+\frac{\epsilon}{2}\norm{z_i}^{2}$,
$f^{\epsilon}(x,y,z)=\sum_{i=1}^{N}f_{i}^{\epsilon}(x_i,y_i,z_i)$.
Consider the following regularized version
of~\eqref{eq:distr-opt-pb-y},
\begin{align}\label{eq:distr-opt-pb-y-eps}
  & \min \limits_{ x\in\real^{Nn}, y\in\real^{Np}, z\in\real^{Nq} }
    \sum_{i=1}^N f_{i}^{\epsilon}(x_i, y_i, z_i), 
  \\
  \notag
  & \text{s.t.} \quad g_i^k(x_i)+\sum_{j\in \Nc_i}(y_i^k-y_j^k) \leq0, \quad k\in\{1,\dots,p\}, \\
  \notag
  & \qquad h_i^l(x_i)+\sum_{j\in\Nc_i}(z_i^k-z_j^k) = 0, \quad l\in\{1,\dots,q\},
\end{align}
and with constraints for all $i\in\{1,\dots,N\}$. Let
$(x^{*,\epsilon},y^{*,\epsilon},z^{*,\epsilon})\in\real^{Nn}\times\real^{Np}\times\real^{Nq}$
be the optimizer of~\eqref{eq:distr-opt-pb-y-eps}, which is unique
because the objective function is strongly convex and the feasible set
is convex. Next we establish a sensitivity result for the regularized
problem~\eqref{eq:distr-opt-pb-y-eps}.

\begin{lemma}\longthmtitle{Sensitivity of regularized
      problem}\label{lem:sensitivity-regularized}
    Given $\delta>0$, there exists $\bar{\epsilon}>0$ so that if
    $\epsilon<\bar{\epsilon}$, then
    $\norm{x^{*,\epsilon}-x^*}<\delta$.
\end{lemma}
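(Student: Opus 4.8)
The plan is to read~\eqref{eq:distr-opt-pb-y-eps} as a Tikhonov regularization of~\eqref{eq:distr-opt-pb-y}: the two problems have identical feasible sets, and their objectives differ only by the quadratic term $\tfrac{\epsilon}{2}(\norm{y}^2+\norm{z}^2)$. Combining the resulting $O(\epsilon)$ suboptimality in $f$ with the strong convexity of $f$ will give a quantitative $O(\sqrt{\epsilon})$ bound on $\norm{x^{*,\epsilon}-x^*}$, from which the statement follows by choosing $\bar\epsilon$ appropriately.

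First I would use Proposition~\ref{prop:equivalence-constraint} to fix a point $(x^*,y^*,z^*)\in\Fc_r^*$; it is feasible for~\eqref{eq:distr-opt-pb-y} and hence, the constraints being the same, also feasible for~\eqref{eq:distr-opt-pb-y-eps}. Comparing the regularized objective at the optimizer $(x^{*,\epsilon},y^{*,\epsilon},z^{*,\epsilon})$ with its value at $(x^*,y^*,z^*)$ gives
\begin{align*}
  f(x^{*,\epsilon}) + \tfrac{\epsilon}{2}\big(\norm{y^{*,\epsilon}}^2+\norm{z^{*,\epsilon}}^2\big) \le f(x^*) + \tfrac{\epsilon}{2}\big(\norm{y^*}^2+\norm{z^*}^2\big),
\end{align*}
hence $f(x^{*,\epsilon}) \le f(x^*) + \tfrac{\epsilon}{2}(\norm{y^*}^2+\norm{z^*}^2)$. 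Next I would check that $x^{*,\epsilon}\in\Fc$: summing the constraints of~\eqref{eq:distr-opt-pb-y-eps} over $i\in\until{N}$, the terms $\sum_i\sum_{j\in\Nc_i}(y_i^k-y_j^k)$ and $\sum_i\sum_{j\in\Nc_i}(z_i^l-z_j^l)$ vanish (since $\mathbf 1^{\transpose}L=0$), so the $x$-component of any point feasible for~\eqref{eq:distr-opt-pb-y-eps} is feasible for~\eqref{eq:distr-opt-pb}; this is exactly the Laplacian cancellation already used in the proof of Proposition~\ref{prop:equivalence-constraint}.

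Then, since $f=\sum_i f_i$ is strongly convex with some modulus $m>0$ and $x^*$ minimizes $f$ over the convex set $\Fc$, the first-order optimality condition yields $\nabla f(x^*)^{\transpose}(x^{*,\epsilon}-x^*)\ge 0$, so that
\begin{align*}
  f(x^{*,\epsilon}) \ge f(x^*) + \nabla f(x^*)^{\transpose}(x^{*,\epsilon}-x^*) + \tfrac{m}{2}\norm{x^{*,\epsilon}-x^*}^2 \ge f(x^*) + \tfrac{m}{2}\norm{x^{*,\epsilon}-x^*}^2.
\end{align*}
Finally, chaining the two displayed bounds gives $\tfrac{m}{2}\norm{x^{*,\epsilon}-x^*}^2 \le \tfrac{\epsilon}{2}(\norm{y^*}^2+\norm{z^*}^2)$, so it suffices to take $\bar\epsilon = m\delta^2/(\norm{y^*}^2+\norm{z^*}^2)$ (any $\bar\epsilon>0$ if $y^*=z^*=\textbf{0}$). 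I do not expect a genuine obstacle here: the only two points needing a word of justification — feasibility of $x^{*,\epsilon}$ for~\eqref{eq:distr-opt-pb} and the variational inequality characterizing $x^*$ — follow, respectively, from the cancellation already exploited in Proposition~\ref{prop:equivalence-constraint} and from plain convexity of $\Fc$ together with differentiability of $f$; the remainder is the standard comparison argument for quadratic regularization.
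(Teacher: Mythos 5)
Your proof is correct, and it takes a genuinely different route from the paper's. The paper argues qualitatively: it fixes $\delta$, uses uniqueness of $x^*$ to get a gap $\beta>0$ between the optimal value and the value of $f$ at feasible points with $\norm{x-x^*}=\delta$, then uses continuity in $\epsilon$ to make $f^{\epsilon}(x^*,y^*,z^*)$ smaller than the values on that sphere, concluding by a level-set trapping argument that the regularized minimizer must have its $x$-component inside the $\delta$-ball; this yields existence of $\bar{\epsilon}$ but no rate. You instead run the standard Tikhonov comparison: feasibility of $(x^*,y^*,z^*)$ for the regularized problem gives $f(x^{*,\epsilon})\leq f(x^*)+\tfrac{\epsilon}{2}(\norm{y^*}^2+\norm{z^*}^2)$, the Laplacian cancellation (already present in the proof of Proposition~\ref{prop:equivalence-constraint}) gives $x^{*,\epsilon}\in\Fc$, and strong convexity of $f$ together with the variational inequality at $x^*$ yields $\tfrac{m}{2}\norm{x^{*,\epsilon}-x^*}^2\leq \tfrac{\epsilon}{2}(\norm{y^*}^2+\norm{z^*}^2)$. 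Both key auxiliary steps you flag are sound, and the existence of a point of $\Fc_r^*$ with $x$-component $x^*$ is exactly what Proposition~\ref{prop:equivalence-constraint} provides. What your route buys is an explicit $O(\sqrt{\epsilon})$ bound and a concrete admissible $\bar{\epsilon}=m\delta^2/(\norm{y^*}^2+\norm{z^*}^2)$, at the price of invoking strong convexity of $f$ in $x$ (which the problem setup guarantees anyway); the paper's argument is less quantitative but would survive with uniqueness of $x^*$ alone, without a strong convexity modulus entering the estimate. One cosmetic note: the constant in your $\bar{\epsilon}$ depends on the particular $(y^*,z^*)\in\Pi_{(y,z)}\Fc_r^*$ you fix, which is harmless since any fixed choice works, but is worth stating since the paper itself points out that the optimizer in $(y,z)$ of~\eqref{eq:distr-opt-pb-y} need not be unique.
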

\begin{proof}
  Let $(x^*,y^*,z^{*})$ be an optimizer of~\eqref{eq:distr-opt-pb-y}
  with $m = f(x^*,y^*,z^*)$.  Since $x^{*}$ is unique, there exists
  $\beta>0$ such that $ f(x)=f^0(x,y,z) \geq m+\beta$, for all
  $x\in\Fc$, $y\in\real^{Np}$, $z\in\real^{Nq}$ whenever
  $\norm{x-x^{*}}=\delta$.  Hence, for any $\epsilon>0$ it follows
  that
  \begin{align*}
    f^{\epsilon}(x,y,z) \geq f^{0}(x,y,z) \geq m + \beta,
  \end{align*}
  for all $x\in\Fc$, $y\in\real^{Np}$, $z\in\real^{Nq}$ whenever
  $\norm{x-x^{*}}=\delta$.  On the other hand, since $f$ is continuous
  with respect to $\epsilon$, for any $\delta>0$ we can find
  $\bar{\epsilon}$ such that
  \begin{align*}
    f^{\epsilon}(x^*,y^*,z^*) &\leq m+\frac{\beta}{2} \quad \forall \epsilon<\bar{\epsilon}.
  \end{align*}
  Hence, by taking $\epsilon<\bar{\epsilon}$ we can ensure that the set
  \begin{align*}
    \setdef{(x,y,z)\in\Fc\times\real^{Np}\times\real^{Nq}}{\norm{x-x^{*}}\leq\delta}
  \end{align*}
  contains the local minimizer of $f^{\epsilon}$ for any
  $\epsilon<\bar{\epsilon}$. Thus,
  $\norm{x^{*,\epsilon}-x^{*}}\leq\delta$ for all
  $\epsilon<\bar{\epsilon}$, as stated.
\end{proof}

\smallskip

Given Lemma~\ref{lem:sensitivity-regularized}, in what follows we
focus on solving~\eqref{eq:distr-opt-pb-y-eps} and assume that
$\epsilon$ is taken sufficiently small to guarantee a desired maximum
distance between $x^{*,\epsilon}$ and~$x^{*}$.

\subsection{Cascade of saddle-point dynamics and safe gradient flow}

In this section, we build on the reformulation presented above to
design our proposed algorithmic solution. Note that, if we had
knowledge of the optimizers $y^{*,\epsilon}, z^{*,\epsilon}$ of
Problem~\eqref{eq:distr-opt-pb-y-eps}, we could break the optimization
into $N$, one per agent $i\in\until{N}$, decoupled optimization
problems as follows,
\begin{align}\label{eq:local-distr-opt-pb-y-eps}
  & \min \limits_{ x_i\in\real^{n} } f_{i}^{\epsilon}(x_i),
  \\
  \notag
  & \text{s.t.} \quad g_i^k(x_i)+\sum_{j\in
    \Nc_i}((y_i^k)^{*,\epsilon}-(y_j^k)^{*,\epsilon}) \leq0, \quad k\in\{1,\dots,p\},
  \\
  \notag
  & \quad \quad h_i^l(x_i)+\sum_{j\in\Nc_i}((z_i^k)^{*,\epsilon}-(z_j^k)^{*,\epsilon})
    = 0, \quad l\in\{1,\dots,q\}.
\end{align}
In turn, each of these problems could be solved in an anytime fashion
by having each agent execute the corresponding safe gradient flow,
cf. Proposition~\ref{properties-sgf}.  However, since $y^{*,\epsilon}$
and $z^{*,\epsilon}$ are not readily available, agents need to
interact with their neighbors to compute them.  Since this would
require an iterative algorithm, this means agents will face evolving
$y$ and $z$ in the corresponding formulation
of~\eqref{eq:local-distr-opt-pb-y-eps}, which raises the additional
challenge of ensuring the anytime nature of the safe gradient flow is
preserved. We tackle these challenges next.

To generate the update law for $y$ and $z$, we propose to use the
projected saddle-point dynamics of~\eqref{eq:distr-opt-pb-y-eps}.
By~\cite[Theorem 5.1]{AC-EM-SHL-JC:18-tac}, these are guaranteed to
converge to its optimizers. Simultaneously, we implement the safe
gradient flow of~\eqref{eq:local-distr-opt-pb-y-eps} with the current
values of $y$ and $z$, i.e., (with the notation
$y_{\Nc_i}=y_i\cup\{y_j\}_{j\in\Nc_i}$,
$z_{\Nc_i}=z_i\cup\{z_j\}_{j\in\Nc_i}$):
\begin{align}\label{eq:sgf-i}
  \notag
  &S_{\alpha}^i(x_i,y_{\Nc_i},z_{\Nc_i}) =
  \\
  &\argmin{\xi_i\in\real^{n}}{\frac{1}{2}\norm{\xi_i+\nabla f_i(x_i)}^2},
  \\
  \notag
  & \text{s.t.} \ \nabla g_i^k(x_i)\xi_i \leq
    -\alpha(g_i^k(x_i)+\sum_{j\in \Nc_i}(y_i^k-y_j^k)), \ k\in\{1,\dots,p\},
  \\
  \notag
  & \quad \ \nabla h_i^l(x_i)\xi_i = -\alpha(h_i^l(x_i)+\sum_{j\in
    \Nc_i}(z_i^l-z_j^l)), \ l\in\{1,\dots,q\},
\end{align}
for all $i\in\{1,\dots,N\}$. We denote
$S_{\alpha}(x,y,z) = [S_{\alpha}^1(x_1,y_{\Nc_1},z_{\Nc_1}),
\dots,S_{\alpha}^N(x_N,y_{\Nc_N},z_{\Nc_N})]$.  To add more
flexibility to our design, we add a timescale separation parameter
$\tau>0$ that allows the projected saddle-point dynamics to be run at
a faster rate relative to the safe gradient flow.  This leads to the
cascaded dynamical system:
\begin{subequations}
  \begin{align}
    \tau \dot{v}_i &= -\nabla f_i(v_i)-\sum_{k=1}^p \lambda_i^k \nabla
                     g_i^k(v_i)- \sum_{l=1}^q \mu_i^l h_i^l(v_i),\label{eq:p-d-sgf-v}
    \\
    \tau \dot{y}_i^k &= -\epsilon y_i^k -\sum_{j\in \Nc_i}
                       (\lambda_i^k-\lambda_j^k),\label{eq:p-d-sgf-y}
    \\
    \tau \dot{z}_i^l &= -\epsilon z_i^l -\sum_{j\in\Nc_i}
                       (\mu_i^l-\mu_j^l),\label{eq:p-d-sgf-z}
    \\ 
    \tau \dot{\lambda}_i^k &= [g_i^k(v_i)+\sum_{j\in
                             \Nc_i}(y_i^k-y_j^k)]_{\lambda_i^k}^+,\label{eq:p-d-sgf-lambda}
    \\
    \tau \dot{\mu}_i^l &=
                         h_i^l(v_i)+\sum_{j\in\Nc_i}(z_i^k-z_j^k),\label{eq:p-d-sgf-mu}
    \\
    \dot{x}_i &= S_{\alpha}^i(x_i,y_{\Nc_i},z_{\Nc_i}),\label{eq:p-d-sgf-x}
  \end{align}
  \label{eq:primal-dual-sgf}
\end{subequations}
for $i\in\{1,\dots,N\}$, $k\in\{1,\dots,p\}$ and $l\in\{1,\dots,q\}$,
where $v_{i}$ are \textit{virtual} variables that play the role of
$x_{i}$ in the projected saddle-point dynamics.
Since~\eqref{eq:primal-dual-sgf} results from the cascaded
interconnection of saddle-point dynamics and the safe gradient flow,
we refer to it as SP-SGF.

\begin{remark}\longthmtitle{Scalability and distributed character of SP-SGF}\label{rem:scalable-distr}
  In algorithm~\eqref{eq:primal-dual-sgf}, each agent has a state
  variable of dimension~$2n+2p+2q$.  To compute the evolution of these
  state variables, each agent only requires information provided by
  its neighbors in~$\Gc$. Therefore, the algorithm is distributed. In
  addition, since the memory needed by each agent to
  run~\eqref{eq:primal-dual-sgf} remains constant as the network size
  $N$ increases, the algorithm is also scalable.  \demo
\end{remark}


\begin{remark}\longthmtitle{Algorithm implementation}
  Note that the execution of SP-SGF requires
  solving the optimization problem~\eqref{eq:sgf-i}, for each
  $i\in\{1,\dots,N\}$,
  which is a quadratic program and hence can be solved efficiently. In
  fact, if the number of constraints is low, closed-form expressions
  for its solution~\cite[Theorem 1]{XT-DVD:21} are available.  \demo
\end{remark}

In what follows, we assume that for all $i\in\{1,\dots,N\}$, the set
of initial conditions for $v_{i}$, $y_{i}$, $z_{i}$, $\lambda_{i}$ and
$\mu_{i}$ in~\eqref{eq:primal-dual-sgf} lie in compact sets $\Vc_{i}$,
$\Yc_{i}$, $\Zc_{i}$, $\Lambda_{i}$ and $M_{i}$ respectively. This
means that the initial conditions $v, y, z, \lambda, \mu$ lie in
compact sets $\Vc:=\bigtimes_{i=1}^{N}\Vc_{i}$,
$\Yc:=\bigtimes_{i=1}^{N}\Yc_{i}$, $\Zc:=\bigtimes_{i=1}^{N}\Zc_{i}$,
$\Lambda=\bigtimes_{i=1}^{N}\Lambda_{i}$,
$M=\bigtimes_{i=1}^{N}M_{i}$. Since the projected saddle-point
dynamics~\eqref{eq:p-d-sgf-v}-\eqref{eq:p-d-sgf-mu} are convergent
by~\cite[Theorem 5.1]{AC-EM-SHL-JC:18-tac}, there exist compact sets
$\bar{\Vc}$, $\bar{\Yc}$, $\bar{\Zc}$, $\bar{\Lambda}$, $\bar{M}$ such
that the trajectories of $v, y, z, \lambda, \mu$
under~\eqref{eq:primal-dual-sgf} stay in $\bar{\Vc}$, $\bar{\Yc}$,
$\bar{\Zc}$, $\bar{\Lambda}$ and $\bar{M}$ respectively for all
positive times. In what follows, we make the following assumption
regarding the feasibility of~\eqref{eq:sgf-i}.

\begin{assumption}\longthmtitle{Feasibility of
    $S_{\alpha}$}\label{as:li-separable}
  For all $i\in\{1,\dots,N\}$,~\eqref{eq:sgf-i} is feasible for all
  $(x_{i},y_{\Nc_i},z_{\Nc_i})\in\Pi_{x_i}\Fc\times
  \Pi_{y_{\Nc_i}}\bar{\Yc} \times \Pi_{z_{\Nc_i}}\bar{\Zc}$.
\end{assumption}

The following result gives a sufficient condition
under which Assumption~\ref{as:li-separable} holds.

\begin{lemma}\longthmtitle{Sufficient condition for feasibility of
    $S_{\alpha}$}\label{lem:suff-cond-feas}
  Suppose that the vectors
  $\{ \nabla g_i^k(x_i) \}_{k=1}^{p} \cup \{ \nabla h_i^l(x_i)
  \}_{l=1}^{q}$ are linearly independent for all
  $x_{i}\in\Pi_{x_i}\Fc$.  Then, for all
  $i\in\{1,\dots,N\}$,~\eqref{eq:sgf-i} is feasible for all
  $(x_{i},y_{\Nc_i},z_{\Nc_i})\in\Pi_{x_i}\Fc\times
  \Pi_{y_{\Nc_i}}\bar{\Yc} \times \Pi_{z_{\Nc_i}}\bar{\Zc}$.
\end{lemma}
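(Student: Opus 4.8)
The plan is to reduce the feasibility of the quadratic program~\eqref{eq:sgf-i} to a statement about a single linear map being surjective, which is immediate from the linear independence hypothesis. Fix $i\in\{1,\dots,N\}$ and an arbitrary triple $(x_i,y_{\Nc_i},z_{\Nc_i})\in\Pi_{x_i}\Fc\times\Pi_{y_{\Nc_i}}\bar{\Yc}\times\Pi_{z_{\Nc_i}}\bar{\Zc}$. First I would collect the constraint gradients of~\eqref{eq:sgf-i} into a matrix $A_i(x_i)\in\real^{(p+q)\times n}$ whose first $p$ rows are the (transposed) gradients $\nabla g_i^k(x_i)$, $k\in\{1,\dots,p\}$, and whose last $q$ rows are the (transposed) gradients $\nabla h_i^l(x_i)$, $l\in\{1,\dots,q\}$, and collect the right-hand sides into the vector $b_i\in\real^{p+q}$ with components $-\alpha(g_i^k(x_i)+\sum_{j\in\Nc_i}(y_i^k-y_j^k))$ and $-\alpha(h_i^l(x_i)+\sum_{j\in\Nc_i}(z_i^l-z_j^l))$. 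With this notation, a vector $\xi_i\in\real^n$ is feasible for~\eqref{eq:sgf-i} precisely when its image under the first $p$ rows of $A_i(x_i)$ is componentwise $\le$ the corresponding entries of $b_i$ and its image under the last $q$ rows equals the corresponding entries of $b_i$; note that the values of $x_i$, $y$, $z$ enter the program only through $b_i$.

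Next I would invoke the hypothesis: since the $p+q$ vectors $\{\nabla g_i^k(x_i)\}_{k=1}^p\cup\{\nabla h_i^l(x_i)\}_{l=1}^q$ are linearly independent, the matrix $A_i(x_i)$ has full row rank $p+q$ (and in particular $p+q\le n$), so the linear map $\xi_i\mapsto A_i(x_i)\xi_i$ is surjective onto $\real^{p+q}$. Hence there exists $\xi_i^\star\in\real^n$ with $A_i(x_i)\xi_i^\star=b_i$; an explicit choice is $\xi_i^\star=A_i(x_i)^\transpose(A_i(x_i)A_i(x_i)^\transpose)^{-1}b_i$. This $\xi_i^\star$ satisfies the equality constraints of~\eqref{eq:sgf-i} exactly and the inequality constraints with equality, hence is feasible. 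Since $i$ and the triple $(x_i,y_{\Nc_i},z_{\Nc_i})$ were arbitrary, \eqref{eq:sgf-i} is feasible on the stated set, which is exactly the content of Assumption~\ref{as:li-separable}.

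I do not expect a genuine obstacle here: the argument is elementary linear algebra, and the only point worth emphasizing is that feasibility is \emph{uniform} in $(x_i,y_{\Nc_i},z_{\Nc_i})$ because surjectivity of $A_i(x_i)$ allows one to hit an arbitrary right-hand side. If one wanted to additionally guarantee strict feasibility of the inequality constraints (which is relevant for the well-posedness hypotheses underlying Proposition~\ref{properties-sgf}), one could perturb $\xi_i^\star$ by a small positive multiple of a vector $\eta_i$ lying in the kernel of the last $q$ rows of $A_i(x_i)$ and making the first $p$ components of $A_i(x_i)\eta_i$ strictly negative; such an $\eta_i$ exists under the same linear independence assumption whenever the inequality gradients are not contained in the span of the equality gradients. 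This refinement, however, is not needed for the statement as given.
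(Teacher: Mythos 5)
Your proposal is correct and follows essentially the same route as the paper's proof: both treat the inequality constraints of~\eqref{eq:sgf-i} as equalities and use the linear independence hypothesis (hence $p+q\le n$ and full row rank of the constraint gradient matrix) to solve the resulting linear system. Your version, with the explicit choice $\xi_i^\star=A_i(x_i)^\transpose(A_i(x_i)A_i(x_i)^\transpose)^{-1}b_i$, is in fact a cleaner packaging of the paper's two-case argument ($p+q<n$ versus $p+q=n$), and correctly observes that the right-hand side $b_i$ is the only place where $(x_i,y_{\Nc_i},z_{\Nc_i})$ enters.
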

\begin{proof}
  By considering the inequality constraints in~\eqref{eq:sgf-i} as
  equality constraints, and since $p+q\leq n$
  necessarily,~\eqref{eq:sgf-i} consists of a linear system of
  equations with at least as many unknowns as equations. If the number
  of equations is strictly less than the number of unknowns (i.e.,
  $p+q<n$),~\eqref{eq:sgf-i} is feasible. If the number of equations
  is equal to the number of unknowns, (i.e.,
  $p+q=n$),~\eqref{eq:sgf-i} is feasible because
  $\{ \nabla g_i^k(x_i) \}_{k=1}^{p} \cup \{ \nabla h_i^l(x_i)
  \}_{l=1}^{q}$ are linearly independent for all
  $x_{i}\in\Pi_{x_i}\Fc$.
\end{proof}

The next result establishes some feasibility and regularity properties
of $S_{\alpha}$. Its proof follows an argument analogous to the proof
of~\cite[Proposition 5.3]{AA-JC:24-tac}.

\begin{proposition}\longthmtitle{Well-posedness  and regularity of
    SP-SGF}\label{prop:properties-local-sgf}
  Under Assumption~\ref{as:li-separable}, the following statements
  hold:
  \begin{itemize}
  \item There exists an open neighborhood $U$ containing
    $\bar{\Vc}\times\bar{\Yc}\times\bar{\Zc}\times\bar{\Lambda}
    \times\bar{M}\times\Fc$ such that~\eqref{eq:primal-dual-sgf} is
    well-defined on $U$;
  \item The dynamical system~\eqref{eq:primal-dual-sgf} is locally
    Lipschitz on $U$;
  \item The Lagrange multipliers of~\eqref{eq:sgf-i} are unique and
    locally Lipschitz as a function of $x,y$ and $z$ on $U$.
  \end{itemize}
\end{proposition}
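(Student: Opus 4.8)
The plan is to exploit the cascaded structure of~\eqref{eq:primal-dual-sgf}, treating the projected saddle-point block~\eqref{eq:p-d-sgf-v}--\eqref{eq:p-d-sgf-mu} and the safe-gradient-flow block~\eqref{eq:p-d-sgf-x} separately and then combining them, following the line of argument of~\cite[Proposition 5.3]{AA-JC:24-tac}.

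First I would dispatch the projected saddle-point block. Its right-hand side is built from $\nabla f_i$ and $\nabla g_i^k$ (locally Lipschitz by hypothesis), $h_i^l$ (affine), the linear neighbor couplings, the constant $1/\tau$, and the saturation map appearing in~\eqref{eq:p-d-sgf-lambda}; this is precisely the data for which~\cite{AC-EM-SHL-JC:18-tac} establishes well-posedness of projected saddle-point dynamics. Consequently this block is well-defined and has the required regularity on the whole $(v,y,z,\lambda,\mu)$-space, i.e., uniformly in $x$.

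The substantive content is the block~\eqref{eq:p-d-sgf-x}, i.e., the maps $S_\alpha^i$. The key observation is that, once the neighbor terms $\sum_{j\in\Nc_i}(y_i^k-y_j^k)$ and $\sum_{j\in\Nc_i}(z_i^l-z_j^l)$ are regarded as fixed scalars, the quadratic program~\eqref{eq:sgf-i} is exactly a safe gradient flow of the form~\eqref{eq:sgf-opt-pb} associated with the local program~\eqref{eq:local-distr-opt-pb-y-eps}, with $(y_{\Nc_i},z_{\Nc_i})$ entering only through the constant terms on the right-hand sides of the constraints, and doing so through a linear (hence globally Lipschitz) map. I would then replay the argument of~\cite[Proposition 5.3]{AA-JC:24-tac}: (a) by Assumption~\ref{as:li-separable} the program~\eqref{eq:sgf-i} is feasible on the compact set $\Pi_{x_i}\Fc\times\Pi_{y_{\Nc_i}}\bar{\Yc}\times\Pi_{z_{\Nc_i}}\bar{\Zc}$, and since its constraints are affine in $\xi_i$ with data depending continuously on $(x_i,y_{\Nc_i},z_{\Nc_i})$ — and the constraint Jacobian depends on $x_i$ alone — a continuity argument on the feasible-set map extends feasibility to an open neighborhood of that compact set; (b) the objective of~\eqref{eq:sgf-i} is strongly convex in $\xi_i$, so wherever the feasible set is nonempty the minimizer $S_\alpha^i$ is unique, giving well-definedness; (c) an active-set analysis of the KKT system of~\eqref{eq:sgf-i} together with the implicit function theorem, exactly as in~\cite[Proposition 5.3]{AA-JC:24-tac}, yields local Lipschitzness of $S_\alpha^i$ and uniqueness and local Lipschitzness of its multipliers as functions of $(x_i,y_{\Nc_i},z_{\Nc_i})$ — the extra arguments $y_{\Nc_i},z_{\Nc_i}$ are harmless since they enter Lipschitz-continuously and do not affect the constraint Jacobian, hence neither the constraint qualification nor the active-set structure used there.

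Finally I would assemble the pieces: take $U$ to be the product of the full $v,\lambda,\mu$-space with open neighborhoods of $\bar{\Yc}$, $\bar{\Zc}$ and $\Fc$ small enough that every~\eqref{eq:sgf-i} is feasible there (this is possible by (a), and such a $U$ contains $\bar{\Vc}\times\bar{\Yc}\times\bar{\Zc}\times\bar{\Lambda}\times\bar{M}\times\Fc$ as required, since the $S_\alpha^i$'s do not involve the $v,\lambda,\mu$ components). On $U$ the concatenation $S_\alpha=[S_\alpha^1,\dots,S_\alpha^N]$ is locally Lipschitz, and together with the saddle-point block the whole vector field~\eqref{eq:primal-dual-sgf} is well-defined and locally Lipschitz, with the multipliers of each~\eqref{eq:sgf-i} unique and locally Lipschitz. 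I expect step (c) — adapting the parametric-optimization/implicit-function argument of~\cite[Proposition 5.3]{AA-JC:24-tac} so that the additional parameters are carried through and the constraint qualification needed for multiplier uniqueness holds on a neighborhood — to be the main obstacle; step (a), the extension of feasibility from the compact set to an open neighborhood, is comparatively routine but still requires some care with the feasible-set map.
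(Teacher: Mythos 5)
Your proposal is correct and takes essentially the same route as the paper: the paper gives no detailed argument for this proposition, saying only that its proof follows an argument analogous to \cite[Proposition 5.3]{AA-JC:24-tac}, which is exactly what you carry out by handling the projected saddle-point block via \cite{AC-EM-SHL-JC:18-tac} and replaying the parametric-QP/active-set argument for each $S_{\alpha}^i$ with $(y_{\Nc_i},z_{\Nc_i})$ entering only through the affine constraint offsets. Your observation that the constraint Jacobian of~\eqref{eq:sgf-i} depends on $x_i$ alone (so the extra parameters affect neither the constraint qualification nor the active-set structure) is precisely the point that makes the analogy go through.
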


\section{Invariance and Convergence Analysis}

Having established the distributed character of the
algorithm~\eqref{eq:primal-dual-sgf}, here we show the forward
invariance of the feasible set and the asymptotic convergence to the
optimizer.

We start by introducing some useful notation. For $i\in\{1,\dots,N\}$,
$k\in\{1,\dots,p\}$ and $l\in\{1,\dots,q\}$, we let
$\psi_{y_i^k}(t;p_0),\psi_{z_j^l}(t;p_0),\psi_{x_i}(t;p_0)$ be the
solution
of~\eqref{eq:p-d-sgf-y},~\eqref{eq:p-d-sgf-z},~\eqref{eq:p-d-sgf-x}
respectively for initial conditions
$p_0=(v_0,y_0,z_0,\lambda_0,\mu_0,x_0)\in
\Pc:=\Vc\times\Yc\times\Zc\times\Lambda\times M \times\Fc$. We also
let
$\psi_{y}(t;p_0) = [\psi_{y_1^1}(t;p_0),
\dots,\psi_{y_1^p}(t;p_0),\dots,\psi_{y_N^1}(t;p_0),\dots,\psi_{y_N^p}(t;p_0)]$,
and define $\psi_{z}(t;p_0)$ and $\psi_{x}(t;p_0)$ analogously.  The
next result establishes the \textit{anytime} nature
of SP-SGF.

\begin{lemma}\longthmtitle{Anytime property}\label{lem:anytime}
  Suppose that $x_0\in\Fc$ and Assumption~\ref{as:li-separable} holds.
  Then, the trajectories of~\eqref{eq:primal-dual-sgf} satisfy
  $\psi_x(t;p_0)\in\Fc$ for all $t\geq0$.
\end{lemma}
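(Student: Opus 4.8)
The plan is to show that, along trajectories of~\eqref{eq:primal-dual-sgf} starting with $x_0 \in \Fc$, the $x$-component stays in $\Fc$ by reducing the claim to the forward invariance property of the safe gradient flow recorded in Proposition~\ref{properties-sgf}(iv). The key observation is that, once the $y,z,\lambda,\mu,v$ components are viewed as exogenous inputs, equation~\eqref{eq:p-d-sgf-x} is exactly the safe gradient flow~\eqref{eq:sgf-i} associated with the per-agent problem~\eqref{eq:local-distr-opt-pb-y-eps}, except that the ``constraint offsets'' $\sum_{j\in\Nc_i}(y_i^k - y_j^k)$ and $\sum_{j\in\Nc_i}(z_i^l - z_j^l)$ are time-varying rather than fixed. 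I would therefore first make precise the decoupling: fix a trajectory $p_0 \mapsto (\psi_v, \psi_y, \psi_z, \psi_\lambda, \psi_\mu, \psi_x)(\cdot\,;p_0)$, which exists and is unique on some interval by Proposition~\ref{prop:properties-local-sgf} (well-posedness and local Lipschitzness under Assumption~\ref{as:li-separable}), and note that $t \mapsto (\psi_y(t;p_0), \psi_z(t;p_0))$ is a continuously differentiable (in particular, locally Lipschitz) signal taking values in $\bar\Yc \times \bar\Zc$ for all $t \ge 0$.

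Next I would argue the invariance directly via the barrier-type inequality built into~\eqref{eq:sgf-i}. Define, for each agent $i$ and each $k$, the function $G_i^k(t) := g_i^k(\psi_{x_i}(t;p_0)) + \sum_{j\in\Nc_i}(\psi_{y_i^k}(t;p_0) - \psi_{y_j^k}(t;p_0))$, and analogously $H_i^l(t)$ for the equality constraints. The global feasible-set membership $\psi_x(t;p_0)\in\Fc$ is equivalent to $\sum_i g_i^k \le 0$ and $\sum_i h_i^l = 0$; summing $G_i^k(t)$ over $i$ makes the mismatch terms telescope to zero (each $y_i^k - y_j^k$ is cancelled by $y_j^k - y_i^k$ across the undirected edge), so $\sum_i G_i^k(t) = g^k(\psi_x(t;p_0))$ and $\sum_i H_i^l(t) = h^l(\psi_x(t;p_0))$. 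Hence it suffices to show that if $\sum_i G_i^k(0) \le 0$ then $\sum_i G_i^k(t) \le 0$ for all $t$, and similarly $\sum_i H_i^l(t) \equiv 0$. Differentiating and using the constraint in~\eqref{eq:sgf-i} satisfied by $S_\alpha^i$ (feasible for all relevant arguments by Assumption~\ref{as:li-separable}), we get $\nabla g_i^k(\psi_{x_i})^\transpose \dot\psi_{x_i} \le -\alpha G_i^k(t)$, and the derivative of the mismatch term is $\sum_{j\in\Nc_i}(\dot\psi_{y_i^k} - \dot\psi_{y_j^k})$; therefore $\dot G_i^k(t) \le -\alpha G_i^k(t) + \sum_{j\in\Nc_i}(\dot\psi_{y_i^k}(t) - \dot\psi_{y_j^k}(t))$. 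Summing over $i$, the $y$-derivative terms telescope to zero again, giving the scalar differential inequality $\frac{d}{dt}\big(g^k(\psi_x(t;p_0))\big) \le -\alpha\, g^k(\psi_x(t;p_0))$; by the comparison lemma, $g^k(\psi_x(t;p_0)) \le e^{-\alpha t} g^k(\psi_x(0;p_0)) \le 0$. For the equality constraints, the analogous computation yields $\nabla h_i^l(\psi_{x_i})^\transpose \dot\psi_{x_i} = -\alpha H_i^l(t)$ exactly (the equality constraint in~\eqref{eq:sgf-i} holds with equality), so after summing $\frac{d}{dt}\big(h^l(\psi_x(t;p_0))\big) = -\alpha\, h^l(\psi_x(t;p_0))$, and since $h^l(\psi_x(0;p_0)) = 0$ we obtain $h^l(\psi_x(t;p_0)) = 0$ for all $t$. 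This establishes $\psi_x(t;p_0)\in\Fc$.

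The main obstacle I anticipate is not the differential inequality itself but the bookkeeping needed to invoke~\eqref{eq:sgf-i} legitimately: the constraint in~\eqref{eq:sgf-i} defining $S_\alpha^i$ is only guaranteed feasible, and its solution only well-defined and Lipschitz, on the domain $\Pi_{x_i}\Fc \times \Pi_{y_{\Nc_i}}\bar\Yc \times \Pi_{z_{\Nc_i}}\bar\Zc$ (Assumption~\ref{as:li-separable} and Proposition~\ref{prop:properties-local-sgf}). A careful argument must therefore run the invariance reasoning on a maximal interval of existence and bootstrap: as long as $\psi_{x_i}(t;p_0) \in \Pi_{x_i}\Fc$ the right-hand side is well-defined, and the argument above shows the $x$-trajectory cannot leave $\Fc$, while the $y,z$-components remain in $\bar\Yc,\bar\Zc$ by construction; combined with boundedness of the other components this precludes finite escape, so the trajectory is defined and stays feasible for all $t \ge 0$. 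An alternative, cleaner route is simply to cite Proposition~\ref{properties-sgf}(iv) applied to the time-varying problem~\eqref{eq:local-distr-opt-pb-y-eps}, viewing~\eqref{eq:p-d-sgf-x} as a non-autonomous safe gradient flow, after checking that the proof of forward invariance there (which is precisely the comparison-lemma argument above) goes through verbatim with time-varying constraint data; I would present the explicit computation since it is short and self-contained.
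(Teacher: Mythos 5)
Your proposal is correct and takes essentially the same approach as the paper: differentiate the aggregated constraints along the $x$-dynamics, invoke the per-agent constraints defining $S_\alpha^i$ in~\eqref{eq:sgf-i}, and exploit the telescoping of the mismatch terms $\sum_{j\in\Nc_i}(y_i^k-y_j^k)$ over the undirected graph so that $\frac{d}{dt}g^k(\psi_x(t;p_0))\leq-\alpha\,g^k(\psi_x(t;p_0))$ and $\frac{d}{dt}h^l(\psi_x(t;p_0))=-\alpha\,h^l(\psi_x(t;p_0))$. The only difference is in the closing step---the paper checks the subtangentiality condition at boundary times and cites Brezis' theorem, while you keep the differential inequality along the whole trajectory and conclude via the comparison lemma---which rests on the same computation and is equally valid.
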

\begin{proof}
  Since Assumption~\ref{as:li-separable} holds, the
  dynamics~\eqref{eq:primal-dual-sgf} are well-defined on a
  neighborhood $U$, cf.  Proposition~\ref{prop:properties-local-sgf}.
  If, at some $\bar{t}$,
  $\sum_{i=1}^{N}g_{i}^k(\psi_{x_i}(\bar{t};p_0))=0$ for
  $k\in\{1,\dots,p\}$, then because of the constraints
  in~\eqref{eq:sgf-i},
  \begin{align*}
    &\frac{d}{dt}\sum_{i=1}^N
      g_i^k(\psi_{x_i}(t;p_0))\rvert_{t=\bar{t}}
    \\
    &=\sum_{i=1}^N \nabla g_i^k(\psi_{x_i}(\bar{t};p_0))
      S_{\alpha}^i(\psi_{x_i}(\bar{t};p_0),\psi_{y_{\Nc_i}}(\bar{t};p_0),
      \psi_{z_{\Nc_i}}(\bar{t};p_0))  
    \\
    &\leq \! -\alpha\sum_{i=1}^N \Big( g_i^k(\psi_{x_i}(\bar{t};p_0)) +
      \! \sum_{j\in\Nc_i}
      (\psi_{y_i^k}(\bar{t};p_0)-\psi_{y_j^k}(\bar{t};p_0)) \Big) = 0 .
  \end{align*}
  Hence by Brezis' Theorem~\cite{HB:70}, it follows that
  $\sum_{i=1}^{N}g_{i}^k(\psi_{x_i}(t;p_0))\leq0$ for all $t\geq0$,
  $p_0\in\Pc$ and $k\in\{1,\dots,p\}$. By a similar argument, $\frac{d}{dt}\sum_{i=1}^N h_i^l(\psi_{x_i}(t;p_0))\rvert_{t=\bar{t}}=0$.
  Hence, it follows that $\sum_{i=1}^{N}h_{i}^l(\psi_{x_i}(t;p_0))=0$
  for all $t\geq0$, $p_{0}\in\Pc$ and $l\in\{1,\dots,q\}$.
\end{proof}

Next, we turn to the study of the convergence properties
of~\eqref{eq:primal-dual-sgf}. The next result establishes a connection
between the equilibrium points of $S_{\alpha}$ and the optimizers
of~\eqref{eq:distr-opt-pb-y-eps}.

\begin{proposition}\longthmtitle{Relationship between equilibria and
    optimizers}\label{prop:eq-sgf-optimizers}
  Let $x\in\Fc$. Then, $S_{\alpha}(x,y^{*,\epsilon},z^{*,\epsilon})=0$
  if and only if $x=x^{*,\epsilon}$.
\end{proposition}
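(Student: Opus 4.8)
The plan is to exploit the KKT characterization of both sides of the equivalence. Fix $x \in \Fc$. By the construction of $S_\alpha$ and Proposition~\ref{properties-sgf}(v) applied to the decoupled program~\eqref{eq:local-distr-opt-pb-y-eps} (evaluated at the optimizers $y^{*,\epsilon}, z^{*,\epsilon}$), the condition $S_\alpha^i(x_i, y^{*,\epsilon}_{\Nc_i}, z^{*,\epsilon}_{\Nc_i}) = 0$ holds if and only if $x_i$ is a KKT point of the $i$th local problem in~\eqref{eq:local-distr-opt-pb-y-eps}. Hence $S_\alpha(x, y^{*,\epsilon}, z^{*,\epsilon}) = 0$ if and only if, for every $i$, there exist multipliers $(\lambda_i, \mu_i)$ with $\lambda_i \ge 0$ such that $\nabla f_i(x_i) + \sum_k \lambda_i^k \nabla g_i^k(x_i) + \sum_l \mu_i^l \nabla h_i^l(x_i) = 0$, together with the complementary slackness condition $\lambda_i^k \big( g_i^k(x_i) + \sum_{j\in\Nc_i}((y_i^k)^{*,\epsilon} - (y_j^k)^{*,\epsilon}) \big) = 0$ (primal feasibility in the local constraints holds since $x\in\Fc$ — this needs a small check, see below).

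Next I would write down the KKT conditions of the global regularized problem~\eqref{eq:distr-opt-pb-y-eps} at its (unique) optimizer $(x^{*,\epsilon}, y^{*,\epsilon}, z^{*,\epsilon})$. Stationarity with respect to $x_i$ gives exactly $\nabla f_i(x_i^{*,\epsilon}) + \sum_k \lambda_i^k \nabla g_i^k(x_i^{*,\epsilon}) + \sum_l \mu_i^l \nabla h_i^l(x_i^{*,\epsilon}) = 0$; stationarity with respect to $y_i^k$ and $z_i^l$ gives the relations $\epsilon (y_i^k)^{*,\epsilon} + \sum_{j\in\Nc_i}(\lambda_i^k - \lambda_j^k) = 0$ and similarly for $z$ (which are precisely the right-hand sides of~\eqref{eq:p-d-sgf-y}--\eqref{eq:p-d-sgf-z} set to zero, and which pin down $y^{*,\epsilon}, z^{*,\epsilon}$ from the multipliers); and complementary slackness for the inequality constraints matches the local one above. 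Since the objective of~\eqref{eq:distr-opt-pb-y-eps} is strongly convex and the feasible set convex, these KKT conditions are necessary and sufficient for the (unique) global optimum. Therefore, if $S_\alpha(x, y^{*,\epsilon}, z^{*,\epsilon}) = 0$, then the collection of local KKT conditions, taken together with the fixed optimal multipliers and $(y^{*,\epsilon}, z^{*,\epsilon})$, reconstitutes the full set of KKT conditions for~\eqref{eq:distr-opt-pb-y-eps}, forcing $x = x^{*,\epsilon}$. Conversely, if $x = x^{*,\epsilon}$, the global KKT stationarity conditions restricted to each $x_i$ block say exactly that $x_i^{*,\epsilon}$ is a KKT point of the $i$th local problem, hence $S_\alpha^i = 0$ for every $i$ by Proposition~\ref{properties-sgf}(v).

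The step I expect to be the main obstacle is the bookkeeping of the multipliers in the "only if" direction: one must argue that the multipliers $(\lambda_i, \mu_i)$ furnished by the local KKT conditions (which are unique under Assumption~\ref{as:li-separable}, via Lemma~\ref{lem:suff-cond-feas}/Proposition~\ref{prop:properties-local-sgf}) are consistent with a \emph{single} choice of $(y^{*,\epsilon}, z^{*,\epsilon})$ across all agents — i.e., that the $y,z$ appearing in the constraints of~\eqref{eq:sgf-i} are genuinely the components of the global optimizer, so that adding up the local complementary-slackness and stationarity conditions reproduces the global KKT system rather than a weaker relaxation. A secondary point is the preliminary check that $x\in\Fc$ implies $x$ is feasible for every \emph{local} constraint of~\eqref{eq:local-distr-opt-pb-y-eps} at $(y^{*,\epsilon},z^{*,\epsilon})$: this follows because summing the local constraints over $i$ telescopes the neighbor differences to zero (the graph is undirected), recovering $g^k(x)\le 0$ and $h^l(x)=0$, combined with the fact that $(x^{*,\epsilon},y^{*,\epsilon},z^{*,\epsilon})$ itself satisfies each local constraint and the argument of Proposition~\ref{prop:equivalence-constraint} shows the mismatch variables are tuned precisely so that local feasibility at the optimal $y,z$ is equivalent to global feasibility of $x$. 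Once these consistency facts are in place, the equivalence is a direct matching of two KKT systems.
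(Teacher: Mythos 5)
Your ``if'' direction is fine, but the ``only if'' direction rests precisely on the step you flag and do not close. From $S_{\alpha}(x,y^{*,\epsilon},z^{*,\epsilon})=0$ you get, for each $i$, \emph{some} local multipliers certifying that $x_i$ is a KKT point of the $i$th problem in~\eqref{eq:local-distr-opt-pb-y-eps}, and you then assert that these ``reconstitute the full set of KKT conditions'' for~\eqref{eq:distr-opt-pb-y-eps}. But the global KKT system also contains the stationarity conditions in $y_i^k$ and $z_i^l$, i.e., $\epsilon (y_i^k)^{*,\epsilon}+\sum_{j\in\Nc_i}(\lambda_i^k-\lambda_j^k)=0$ and its analogue for $z$, and nothing forces the multipliers produced by the local problems at an arbitrary zero $x$ to satisfy these coupling equations --- they need not coincide with the optimal multipliers of~\eqref{eq:distr-opt-pb-y-eps}. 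If you instead try to use the global optimal multipliers in the local KKT conditions at $x$, the claim becomes essentially what you are trying to prove. So the key step of the ``only if'' direction is unproven as written. A secondary point: your preliminary ``small check'' is backwards --- $x\in\Fc$ does \emph{not} imply that each local constraint of~\eqref{eq:local-distr-opt-pb-y-eps} holds at $(y^{*,\epsilon},z^{*,\epsilon})$ (summing the local constraints telescopes local feasibility into global feasibility, not conversely). Fortunately this is not needed: $S_{\alpha}^i(x_i,\cdot)=0$ means $\xi_i=0$ is feasible for~\eqref{eq:sgf-i}, which already forces the local constraints to hold at~$x_i$.

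The multiplier bookkeeping can be avoided entirely, which is how the paper argues. For fixed $(y,z)=(y^{*,\epsilon},z^{*,\epsilon})$, $S_{\alpha}^i$ is exactly the safe gradient flow of the $i$th local problem~\eqref{eq:local-distr-opt-pb-y-eps}; by Proposition~\ref{properties-sgf}(v) and convexity, its zeros are the KKT points of that problem, which (strongly convex objective, convex constraints) form a single point, the unique minimizer. It then only remains to identify this minimizer with $x_i^{*,\epsilon}$: freezing $(y,z)$ at their optimal values in~\eqref{eq:distr-opt-pb-y-eps} yields a problem in $x$ alone that decouples into the $N$ local problems, and its minimizer must be $x^{*,\epsilon}$, since any better feasible $x$ on this slice would produce a feasible point of~\eqref{eq:distr-opt-pb-y-eps} improving on $(x^{*,\epsilon},y^{*,\epsilon},z^{*,\epsilon})$. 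Alternatively, if you want to stay within your KKT framework, your own ``if'' argument shows that $x_i^{*,\epsilon}$ is a KKT point of local problem $i$ (restrict the global KKT conditions to the $x_i$ block); hence both $x_i$ and $x_i^{*,\epsilon}$ minimize the same strongly convex local problem and therefore coincide. Either repair closes the gap with no consistency argument about multipliers.
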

\begin{proof}
  Note that
  $S_{\alpha}^{i}(x_i,y_{\Nc_i}^{*,\epsilon},z_{\Nc_i}^{*,\epsilon})$
  is the safe gradient flow associated to the optimization
  problem~\eqref{eq:local-distr-opt-pb-y-eps}, which by
  Proposition~\ref{prop:equivalence-constraint} has
  $x_{i}^{*,\epsilon}$ as the unique optimizer.  The result then
  follows from Proposition~\ref{properties-sgf}(v).
\end{proof}

\smallskip

The next result states that the trajectories of~$x$
in~\eqref{eq:primal-dual-sgf} converge to the optimizer
of~\eqref{eq:distr-opt-pb-y-eps}.

\begin{theorem}\longthmtitle{Convergence to
    optimizer}\label{prop:convergence-to-opt}
  Suppose Assumption~\ref{as:li-separable} holds.  For any $\delta>0$
  and compact set $\Omega$ containing
  $\setdef{x\in\Fc}{\norm{x-x^{*,\epsilon}}\leq\delta}$, there exists
  $\tau_{\delta,\Omega}>0$ and $T_{\delta,\Omega}$ such that if
  $\tau<\tau_{\delta,\Omega}$, then under the
  dynamics~\eqref{eq:primal-dual-sgf}, $\norm{\psi_x(t;p_0)-x^{*,\epsilon}}<\delta$
  for all $t\geq T_{\delta,\Omega}$ and
  $p_0=(v_0,y_0,z_0,\lambda_0,\mu_0,x_0)\in\Pc$.  Moreover, if $\Fc$
  is bounded, then for any $\tau>0$, $\lim\limits_{t\to\infty}\norm{\psi_x(t;p_0)-x^{*,\epsilon}} = 0$
  for all $p_{0}\in\Pc$.
\end{theorem}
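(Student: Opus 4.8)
The plan is to treat \eqref{eq:primal-dual-sgf} as a singularly perturbed (two-timescale) system, with the projected saddle-point block \eqref{eq:p-d-sgf-v}--\eqref{eq:p-d-sgf-mu} as the fast dynamics (scaled by $\tau$) and the safe gradient flow \eqref{eq:p-d-sgf-x} as the slow dynamics. First I would establish the behavior of the fast subsystem: for frozen $x$, the variables $(v,y,z,\lambda,\mu)$ evolve (after rescaling time by $1/\tau$) exactly as the projected saddle-point dynamics \eqref{eq:projected-saddle-point-dyn} associated to the regularized problem \eqref{eq:distr-opt-pb-y-eps}, whose objective is strongly convex in $(x,y,z)$ and whose constraints are convex/affine. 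Hence by \cite[Theorem 5.1]{AC-EM-SHL-JC:18-tac} the unique saddle point $(v^{*,\epsilon},y^{*,\epsilon},z^{*,\epsilon},\lambda^{*,\epsilon},\mu^{*,\epsilon})$ is globally asymptotically stable for the fast flow. In particular $\psi_y(t;p_0)\to y^{*,\epsilon}$ and $\psi_z(t;p_0)\to z^{*,\epsilon}$ as the fast time grows, and by the compactness of $\bar{\Vc},\bar{\Yc},\bar{\Zc},\bar{\Lambda},\bar{M}$ (guaranteed just before Assumption~\ref{as:li-separable}) this convergence is uniform over the relevant compact sets of initial data.

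Next I would invoke the singular perturbation machinery. The reduced (slow) dynamics obtained by setting the fast variables to their quasi-steady-state $y=y^{*,\epsilon}$, $z=z^{*,\epsilon}$ is precisely $\dot{x}=S_{\alpha}(x,y^{*,\epsilon},z^{*,\epsilon})$, the decoupled safe gradient flows of \eqref{eq:local-distr-opt-pb-y-eps}. By Proposition~\ref{prop:eq-sgf-optimizers} its unique equilibrium in $\Fc$ is $x^{*,\epsilon}$, and since each $f_i^\epsilon$ is strongly convex, $x_i^{*,\epsilon}$ is a strict local (indeed global) minimizer and an isolated equilibrium, so Proposition~\ref{properties-sgf}(iv)--(vi) gives that $\Fc$ is forward invariant and $x^{*,\epsilon}$ is asymptotically stable relative to $\Fc$ for the reduced flow. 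Combining this with the global asymptotic stability of the fast subsystem, a standard two-timescale / singular perturbation result (of the Tikhonov type, or the Lyapunov-based composite-system argument, applicable here because the slow dynamics live on the forward-invariant compact-in-$x$ portion of $\Fc$ once we restrict to $\Omega$) yields: for every $\delta>0$ and every compact $\Omega\supseteq\setdef{x\in\Fc}{\norm{x-x^{*,\epsilon}}\le\delta}$ there is $\tau_{\delta,\Omega}>0$ such that for $\tau<\tau_{\delta,\Omega}$ the $x$-trajectory enters and remains in the $\delta$-ball around $x^{*,\epsilon}$ after a finite time $T_{\delta,\Omega}$. The anytime property (Lemma~\ref{lem:anytime}) ensures the trajectory never leaves $\Fc$, so the perturbation estimates only need to control the distance to $x^{*,\epsilon}$ within $\Fc$.

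For the second claim, when $\Fc$ is bounded the entire state lives in a compact set for all time (the $x$-component in $\Fc$ by Lemma~\ref{lem:anytime}, the rest in the $\bar{(\cdot)}$ sets), so no restriction to $\Omega$ is needed and no smallness of $\tau$ is required: one can run a global LaSalle-type argument on the full cascade using a composite Lyapunov function $W = V_{\mathrm{fast}}(v,y,z,\lambda,\mu) + c\,V_{\mathrm{slow}}(x)$, where $V_{\mathrm{fast}}$ is the saddle-point Lyapunov function from \cite{AC-EM-SHL-JC:18-tac} and $V_{\mathrm{slow}}$ is a Lyapunov function for the safe gradient flow (e.g.\ $f^\epsilon(x)-f^\epsilon(x^{*,\epsilon})$ or the squared-distance-type certificate underlying Proposition~\ref{properties-sgf}(vi)); the cross terms coming from the dependence of $S_\alpha$ on $(y,z)$ are dominated using local Lipschitzness of $S_\alpha$ (Proposition~\ref{prop:properties-local-sgf}) together with the decay of $(y,z)$ to $(y^{*,\epsilon},z^{*,\epsilon})$, and boundedness makes all constants uniform. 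LaSalle then forces convergence to the largest invariant set where both Lyapunov derivatives vanish, which by uniqueness of the saddle point and Proposition~\ref{prop:eq-sgf-optimizers} is the single point; hence $\psi_x(t;p_0)\to x^{*,\epsilon}$.

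The main obstacle I anticipate is the interconnection term: $S_\alpha(x,y,z)$ depends on the fast variables $(y,z)$, so the slow dynamics are perturbed by $S_\alpha(x,\psi_y(t),\psi_z(t)) - S_\alpha(x,y^{*,\epsilon},z^{*,\epsilon})$, and one must show this perturbation is integrable/vanishing uniformly enough that it does not destroy stability of $x^{*,\epsilon}$ — in the unbounded case this is exactly why $\tau$ must be taken small (so the fast variables settle before $x$ can wander out of $\Omega$), and making the dependence of $\tau_{\delta,\Omega}$ and $T_{\delta,\Omega}$ on the data precise, while controlling the region of attraction of the reduced safe-gradient-flow equilibrium (which Proposition~\ref{properties-sgf} only guarantees locally), is the delicate part. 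In the bounded case the difficulty instead shifts to verifying the cross-term domination in the composite Lyapunov analysis, but compactness of the whole trajectory space makes the required bounds available.
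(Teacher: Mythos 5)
Your first part follows essentially the same route as the paper: identify \eqref{eq:p-d-sgf-v}--\eqref{eq:p-d-sgf-mu} as the (autonomous) fast block, take $\dot{\bar{x}}=S_{\alpha}(\bar{x},y^{*,\epsilon},z^{*,\epsilon})$ as the reduced model with unique equilibrium $x^{*,\epsilon}$ (Proposition~\ref{prop:eq-sgf-optimizers}), and invoke a Tikhonov-type two-timescale result. The one substantive point you gloss over is that the classical Tikhonov theorem (e.g., \cite[Theorem~11.2]{HKK:02}) requires smoothness that \eqref{eq:primal-dual-sgf} does not have: the projection $[\cdot]^{+}_{\lambda}$ and the QP-defined map $S_{\alpha}$ are only locally Lipschitz. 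The paper must appeal to a singular-perturbation result for differential inclusions, \cite[Corollary~3.4]{FW:05}, after redefining the vector field outside a compact set so that it is globally Lipschitz. Without naming such a nonsmooth version, ``a standard two-timescale result'' leaves open precisely the technical obstruction the proof has to clear.

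For the bounded case your route genuinely differs from the paper's. You propose a composite Lyapunov function $W=V_{\mathrm{fast}}+c\,V_{\mathrm{slow}}$ plus LaSalle; the paper instead exploits that the fast block is a pure cascade (it does not depend on $x$), so $(y,z,\lambda,\mu)$ converge for \emph{every} $\tau>0$, and then runs a direct decay estimate on $f(\psi_x(t;p_0))-f(x^{*,\epsilon})$: inequality \eqref{eq:df-dt} gives $\tfrac{d}{dt}f\le-\norm{S_{\alpha}}^2+\alpha\sum_{i,k}\phi_i^k\hat{g}_i^k+\alpha\sum_{i,l}\chi_i^l\hat{h}_i^l$, and the Comparison Lemma together with $\dot{y},\dot{z}\to0$ shows the multiplier terms eventually become dominated by $-\norm{S_{\alpha}}^2$, which is bounded away from zero whenever $\psi_x$ is outside a small ball around $x^{*,\epsilon}$. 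Be aware that the ``cross terms'' in your $\dot{W}$ are not merely the Lipschitz mismatch $S_{\alpha}(x,y,z)-S_{\alpha}(x,y^{*,\epsilon},z^{*,\epsilon})$: the derivative of $f$ along $S_{\alpha}(x,y,z)$ contains the sign-indefinite products $\phi_i^k\hat{g}_i^k$, because the local constraints $\hat{g}_i^k$ in \eqref{eq:sgf-i} need not be nonpositive for individual $i$ even when $x\in\Fc$ (only their sum over $i$ is). Any composite-Lyapunov argument has to control these terms explicitly, which is exactly what the paper's comparison-lemma step does; your proposal acknowledges a difficulty here but does not supply the mechanism. With that mechanism added, your LaSalle variant would go through; the paper's version buys a more elementary argument with explicit times $T_{1,\theta,\tau},T_{2,\theta,\tau}$ and no gain $c$ to tune.
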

%
%
\begin{proof}
  Since the dynamics in~\eqref{eq:primal-dual-sgf} are not
  differentiable, the standard version of Tikhonov's theorem for
  singular perturbations~\cite[Theorem~11.2]{HKK:02} is not
  applicable.  Instead we use~\cite[Corollary~3.4]{FW:05}, which gives
  a Tikhonov-type singular perturbation statement for differential
  inclusions. In the case of non-smooth ODEs for which the fast
  dynamics do not depend on the slow variable,
  like~\eqref{eq:primal-dual-sgf}, we need to check the following
  assumptions.  First, that the dynamics~\eqref{eq:primal-dual-sgf}
  are Lipschitz. Note that local Lipschitzness
  of~\eqref{eq:primal-dual-sgf} follows from
  Proposition~\ref{prop:properties-local-sgf}, the Lipschitzness of
  the gradients of $f$ and $g$ and the Lipschitzness of the $\max$
  operator. Moreover, since $\bar{\Vc}$, $\bar{\Yc}$, $\bar{\Zc}$,
  $\bar{\Lambda}$, $\bar{M}$ and $\Omega$ are compact, we can redefine
  the dynamics~\eqref{eq:primal-dual-sgf} outside of
  $\bar{\Vc}\times\bar{\Yc}\times\bar{\Zc}\times\bar{\Lambda}\times\bar{M}\times\Omega$
  so that they are globally Lipschitz while still keeping the same
  dynamics for initial conditions in
  $\Vc\times\Yc\times\Zc\times\Lambda\times M\times\Omega$.
  Second, existence and uniqueness of the equilibrium of the fast
  dynamics. This follows from the fact
  that~\eqref{eq:distr-opt-pb-y-eps} has a strongly convex objective
  function and convex constraints, which implies that it has a unique
  KKT point. 
  Third, Lipschitzness and asymptotic stability of the reduced-order
  model
  \begin{align}\label{eq:rom}
    \dot{\bar{x}}=S_{\alpha}(\bar{x},y^{*,\epsilon},z^{*,\epsilon})
  \end{align}
  Lipschitzness follows from
  Proposition~\ref{prop:properties-local-sgf} and asymptotic stability
  follows from~\cite[Theorem~5.6]{AA-JC:24-tac} and the fact that
  $\Omega$ is compact. Fourth, asymptotic stability of the fast
  dynamics. This follows
  from~\cite[Theorem~5.1]{AC-EM-SHL-JC:18-tac}. Finally, note that
  $x^{*,\epsilon}$ is the only equilibrium point of~\eqref{eq:rom} and
  hence the result follows from~\cite[Corollary~3.4]{FW:05}.

  Now suppose that $\Fc$ is compact. Pick an arbitrary
  $\theta>0$. Since $f$ is continuous and $x^{*,\epsilon}$ is the
  unique minimizer of~\eqref{eq:distr-opt-pb-y-eps}, there exist
  constants $a_{\theta}>0$, $b_{\theta}>0$ such that the sets
  \begin{align*}
    A_{\theta}&=\setdef{x\in\Fc}{f(x)-f(x^{*,\epsilon})\leq a_{\theta}} \\
    B_{\theta}&=\setdef{x\in\Fc}{\norm{x-x^{*,\epsilon}}\leq b_{\theta}} \\
    C_{\theta}&=\setdef{x\in\Fc}{\norm{x-x^{*,\epsilon}}\leq\theta}
  \end{align*}
  satisfy $ B_{\theta} \subseteq A_{\theta} \subseteq C_{\theta}$.
  Next, we show that there exists $T_{\theta}>0$ such that
  $\psi_x(t;p_0)\in C_{\theta}$ for $t\geq T_{\theta}$ and all
  $p_{0}\in\Pc$ (i.e., $C_{\theta}$ is asymptotically stable relative
  to $\Fc$). Since $\theta$ is arbitrary, this completes the proof.
  Let
  $( \{ \phi_i^k(x_i,y_{\Nc_i},z_{\Nc_i}) \}_{k=1}^{p}, \{
  \chi_i^l(x_i,y_{\Nc_i},z_{\Nc_i}) \}_{l=1}^q )$ be the Lagrange
  multipliers associated to the optimization problem defining
  $S_{\alpha}^{i}(x_i,y_{\Nc_i},z_{\Nc_i})$, which are unique and
  locally Lipschitz by
  Proposition~\ref{prop:properties-local-sgf}. Then, by following an
  argument analogous to the one in the proof
  of~\cite[Lemma~5.8]{AA-JC:24-tac}:
  \begin{align}\label{eq:df-dt}
    & \frac{d}{dt}(f(x)-f(x^{*,\epsilon})) \leq
      -\norm{S_{\alpha}(x,y,z)}^2
    \\
    \notag
    &\quad +\sum_{i=1}^N \sum_{k=1}^p
      \phi_i^k(x_i,y_{\Nc_i},z_{\Nc_i}) \alpha
      (g_i^k(x_i)+\sum_{j\in\Nc_i}(y_i^k-y_j^k))
    \\
    \notag
    &\quad +\sum_{i=1}^N \sum_{l=1}^q
      \chi_i^l(x_i,y_{\Nc_i},z_{\Nc_i}) \alpha
      (h_i^l(x_i)+\sum_{j\in\Nc_i}(z_i^k-z_j^k)). 
  \end{align}
  Since~\eqref{eq:p-d-sgf-v}-\eqref{eq:p-d-sgf-mu} are the projected
  saddle-point dynamics of~\eqref{eq:distr-opt-pb-y-eps} and the
  objective function of~\eqref{eq:distr-opt-pb-y-eps} is strongly
  convex, by~\cite[Theorem 5.1]{AC-EM-SHL-JC:18-tac}, the variables
  $v, y, z, \lambda, \mu$ converge to the KKT point
  of~\eqref{eq:distr-opt-pb-y-eps} for all $\tau>0$. Moreover, since
  $S_{\alpha}(x,y^{*,\epsilon},z^{*,\epsilon})=0$ if and only if
  $x=x^{*,\epsilon}$ by Proposition~\ref{prop:eq-sgf-optimizers},
  $S_{\alpha}$ is continuous by
  Proposition~\ref{prop:properties-local-sgf} and $\Pc$ is compact,
  for any fixed $\tau>0$, there exist $\sigma_{\theta,\tau}$ and
  $T_{1,\theta,\tau}$ such that for all $t\geq T_{1,\theta,\tau}$ and
  $p_{0}\in\Pc$, $\norm{\psi_x(t;p_0)-x^{*,\epsilon}} > b_{\theta}$
  implies
  $\norm{S_{\alpha}(\psi_x(t;p_0),\psi_y(t;p_0),\psi_z(t;p_0))}>\sigma_{\theta,\tau}$.

  Now, define
  \begin{align*}
    \hat{g}_i^k(t,p_0)
    &=
      g_i^k(\psi_{x_i}(t;p_0)) +
      \sum_{j\in\Nc_i}(\psi_{y_i^k}(t;p_0)-\psi_{y_j^k}(t;p_0)),   
    \\
    \hat{h}_i^l(t,p_0)
    &=
      h_i^l(\psi_{x_i}(t;p_0))+\sum_{j\in\Nc_i}(\psi_{z_i^l}(t;p_0)-\psi_{z_j^l}(t;p_0)), 
    \\
    \hat{\phi}_i^k(t,p_0)
    &=
      \phi_i^k(\psi_{x_i}(t;p_0),\psi_{y_{\Nc_i}}(t;p_0),\psi_{z_{\Nc_i}}(t;p_0)), 
    \\
    \hat{\chi}_i^l(t,p_0)
    &=
      \chi_i^l(\psi_{x_i}(t;p_0),\psi_{y_{\Nc_i}}(t;p_0),\psi_{z_{\Nc_i}}(t;p_0)), 
  \end{align*}
  and let us show that there exists a time $T_{2,\theta,\tau}>0$ such
  that
  \begin{align}\label{eq:sigma-theta}
    \alpha\sum_{i=1}^N \sum_{k=1}^p
    \hat{\phi}_i^k(t,p_0)\hat{g}_i^k(t,p_0)+\alpha\sum_{i=1}^N \sum_{l=1}^q
    \hat{\chi}_i^l(t,p_0) \hat{h}_i^l(t,p_0)<\frac{\sigma_{\theta,\tau}}{2},
  \end{align}
  for all $t\geq T_{2,\theta}$ and $p_{0}\in\Pc$.
  First define
  \begin{align*}
    c_{\phi} :=
    \max_{\substack{(x,y,z)\in\Fc\times\bar{\Yc}\times\bar{\Zc}\\i\in\{1,\dots,N\}\\k\in\{1,\dots,p\}}} 
    |\phi_i^k(x_i,y_{\Nc_i},z_{\Nc_i})|, 
    \\
    c_{\chi} :=
    \max_{\substack{(x,y,z)\in\Fc\times\bar{\Yc}\times\bar{\Zc}\\i\in\{1,\dots,N\}\\l\in\{1,\dots,q\}}} 
    |\chi_i^l(x_i,y_{\Nc_i},z_{\Nc_i})|.
  \end{align*}
  Note that such $c_{\phi}, c_{\chi}$ exist because $\Fc$, $\bar{\Yc}$
  and $\bar{\Zc}$ are compact.  Now note that
  \begin{align*}
    &\frac{d}{dt} (\hat{g}_i^k(t,p_0)) \leq -\alpha \hat{g}_i^k(t,p_0)+\sum_{j\in\Nc_i}(\dot{\psi}_{y_i^k}(t;p_0)-\dot{\psi}_{y_j^k}(t)),
    \\
    &\frac{d}{dt} (\hat{h}_i^l(t,p_0)) \leq -\alpha \hat{h}_i^l(t,p_0)+\sum_{j\in\Nc_i}(\dot{\psi}_{z_i^l}(t;p_0)-\dot{\psi}_{z_j^l}(t)). 
  \end{align*}
  Since the variables $y_{i}^{k}, z_{i}^{k}$ are convergent
  by~\cite[Theorem 5.1]{AC-EM-SHL-JC:18-tac},
  \begin{align*}
    \lim_{t\to\infty} \dot{\psi}_{y_i^k}(t;p_0) = 0, \quad \forall
    i\in\{1,\dots,N\}, \ k\in\{1,\dots,p\},
    \\
    \lim_{t\to\infty} \dot{\psi}_{z_i^l}(t;p_0) = 0, \quad \forall
    i\in\{1,\dots,N\}, \ l\in\{1,\dots,q\}. 
  \end{align*}
  for all $p_{0}\in\Pc$. Hence, there exists a time
  $\hat{T}_{2,\theta,\tau}>0$ such that
  \begin{align*}
    \sum_{j\in\Nc_i}(\dot{\psi}_{y_i^k}(t;p_0)-\dot{\psi}_{y_j^k}(t))
    \leq \frac{\sigma_{\theta,\tau}}{8 \alpha Np c_{\phi}},
    \\
    \sum_{j\in\Nc_i}(\dot{\psi}_{z_i^l}(t;p_0)-\dot{\psi}_{z_j^l}(t))
    \leq \frac{\sigma_{\theta,\tau}}{8 \alpha Np c_{\chi}} 
  \end{align*}
  for all $i\in\{1,\dots,N\}$, $k\in\{1,\dots,p\}$,
  $l\in\{1,\dots,q\}$ and $t\geq\hat{T}_{2,\theta,\tau}$,
  $p_{0}\in\Pc$. By the Comparison Lemma~\cite[Lemma 3.4]{HKK:02}, it
  holds that
  \begin{align*}
    &\hat{g}_i^k(t,p_0) \leq \hat{g}_i^k(\hat{T}_{2,\theta,\tau},p_0)
      e^{-\alpha (t-\hat{T}_{2,\theta,\tau})} +
      \frac{\sigma_{\theta,\tau}}{8 Np c_{\phi}},
    \\
    &\hat{h}_i^l(t,p_0) \leq \hat{h}_i^l(\hat{T}_{2,\theta,\tau},p_0)
      e^{-\alpha (t-\hat{T}_{2,\theta,\tau})} +
      \frac{\sigma_{\theta,\tau}}{8 Np c_{\chi}}. 
  \end{align*}
  Since $\Fc$ is compact, $\psi_{x}(t;p_0)\in\Fc$ by
  Lemma~\ref{lem:anytime}, $\psi_{y}(t;p_0)\in\bar{\Yc}$ and
  $\psi_{z}(t;p_0)\in\bar{\Zc}$ for all $t\geq0$, this implies that
  there exists a time $T_{2,\theta,\tau}>0$ such
  that~\eqref{eq:sigma-theta} holds for all $t\geq T_{2,\theta,\tau}$
  and $p_{0}\in\Pc$. Now, let
  $T_{\theta,\tau}=\max\{T_{1,\theta,\tau}, T_{2,\theta,\tau}
  \}$. Then, it holds that for all $t\geq T_{\theta,\tau}$, $\frac{d}{dt}(f(\psi_x(t;p_0))-f(x^{*,\epsilon})) < 0$ if $\norm{\psi_x(t;p_0)-x^{*,\epsilon}}>b_{\theta}$. Since
  $B_{\theta}\subseteq
  A_{\theta}$, this implies that
  $A_{\theta}$ is asymptotically stable relative to $\Fc$. Since
  $A_{\theta}\subseteq C_{\theta}$, it follows that $C_{\theta}$ is
  asymptotically stable relative to $\Fc$, hence completing the
  proof. Note that this argument is valid for all fixed $\tau>0$.
\end{proof}

\smallskip

By Theorem~\ref{prop:convergence-to-opt}, the trajectories of the $x$
variable in SP-SGF converge arbitrarily close to the optimizer
$x^{*,\epsilon}$ provided that the timescale parameter $\tau$ is small
enough. Moreover, if the feasible set $\Fc$ is bounded, asymptotic
convergence holds for any timescale. The combination of the scalable
and distributed character, cf. Remark~\ref{rem:scalable-distr}, the
anytime nature, cf. Lemma~\ref{lem:anytime}, and the convergence
properties, cf. Theorem~\ref{prop:convergence-to-opt} means that
SP-SGF provides the algorithmic solution with the properties stated in
Section~\ref{sec:problem-statement}.

\begin{example}\longthmtitle{Resource allocation}
  We illustrate the behavior of SP-SGF in a resource allocation
  example.  Consider $13$ agents whose communication graph is an
  undirected line graph. Solving distributed optimization problems
  with this particular topology is challenging due to its low
  connectivity.  Each agent's state variable is
  $x_{i}=[x_{i,1}, x_{i,2}]\in\real^{2}$, where $x_{i,1}$
  (resp. $x_{i,2}$) corresponds to the amount of resource $1$
  (resp. $2$) allocated by agent $i$. Resource $1$ is subject to an
  equality constraint and resource $2$ is subject to an inequality
  constraint. Hence, the agents solve the optimization problem,
  \begin{align}\label{eq:separable-example}
    & \min \limits_{{\{x_i\}_{i=1}^{13}}} \sum_{i=1}^{13} \frac{1}{2}\norm{x_i}^2,
    \\
    \notag
    & \text{s.t.} \ h(\{x_i\}_{i=1}^{13}) = 5-\sum_{i=1}^{13}p_i x_{i,1} = 0, \\
    \notag
    & \quad \ g(\{x_i\}_{i=1}^{13}) = -3+\sum_{i=1}^{13} e^{-x_{i,2}} \leq 0.
  \end{align}
  with $p_1=1$, $p_{2}=3$, $p_{3}=2$, $p_{4}=1$, $p_{5}=1$,
  $p_{6}=1$, $p_{7}=2$, $p_{8}=4$, $p_{9}=1$, $p_{10}=1$, $p_{11}=0.5$, $p_{12}=2$, $p_{13}=1$.  Note that the condition
  in Lemma~\ref{lem:suff-cond-feas} holds and hence
  Assumption~\ref{as:li-separable} holds. This implies by
  Proposition~\ref{prop:convergence-to-opt}
  that~\eqref{eq:primal-dual-sgf} is well-defined
  for~\eqref{eq:separable-example}. We use $\epsilon=0.0001$ and
  $\alpha=1$.  Figure~\ref{fig:sim-variables-separable} illustrates
  the convergence of the $x$ variables under SP-SGF.

  Since the feasible set of~\eqref{eq:separable-example} is unbounded,
  Proposition~\ref{prop:convergence-to-opt} states that convergence
  arbitrarily close to the optimizer can be achieved by taking $\tau$
  sufficiently small.  Figure~\ref{fig:x1-taus} illustrates the
  convergence of the quantities $\sum_{i=1}^{13}x_{i,1}^{2}$ and $\sum_{i=1}^{13}x_{i,2}^{2}$ for
  different values of $\tau$ and shows that this quantity converges
  exactly to its optimal value for a wide range of values of $\tau$,
  suggesting that the statement in
  Theorem~\ref{prop:convergence-to-opt} might be too conservative.
  Figure~\ref{fig:sim-constraint-satisfaction-separable} compares the
  evolution of the constraints of~\eqref{eq:separable-example} under
  SP-SGF against two other algorithms: the projected saddle point
  dynamics~\eqref{eq:projected-saddle-point-dyn} (abbreviated SP),
  which is not distributed, and the projected saddle-point dynamics
  (abbreviated SP-CM) for its reformulation with constraint mismatch
  variables as in~\eqref{eq:distr-opt-pb-y}, which is distributed.
  SP-SGF satisfies the constraints at all times whereas SP and SP-CM
  do not. We note that, in this case, SP-SGF requires running a
  dynamical system with 104 scalar variables (8 for each agent), SP-CM
  requires running a dynamical system of 78 scalar variables (6 for
  each agent) and SP requires running a dynamical system with 28
  scalar variables. \oprocend
\end{example}

\begin{figure}[htb]
  \centering
  \includegraphics[width=0.3\textwidth]{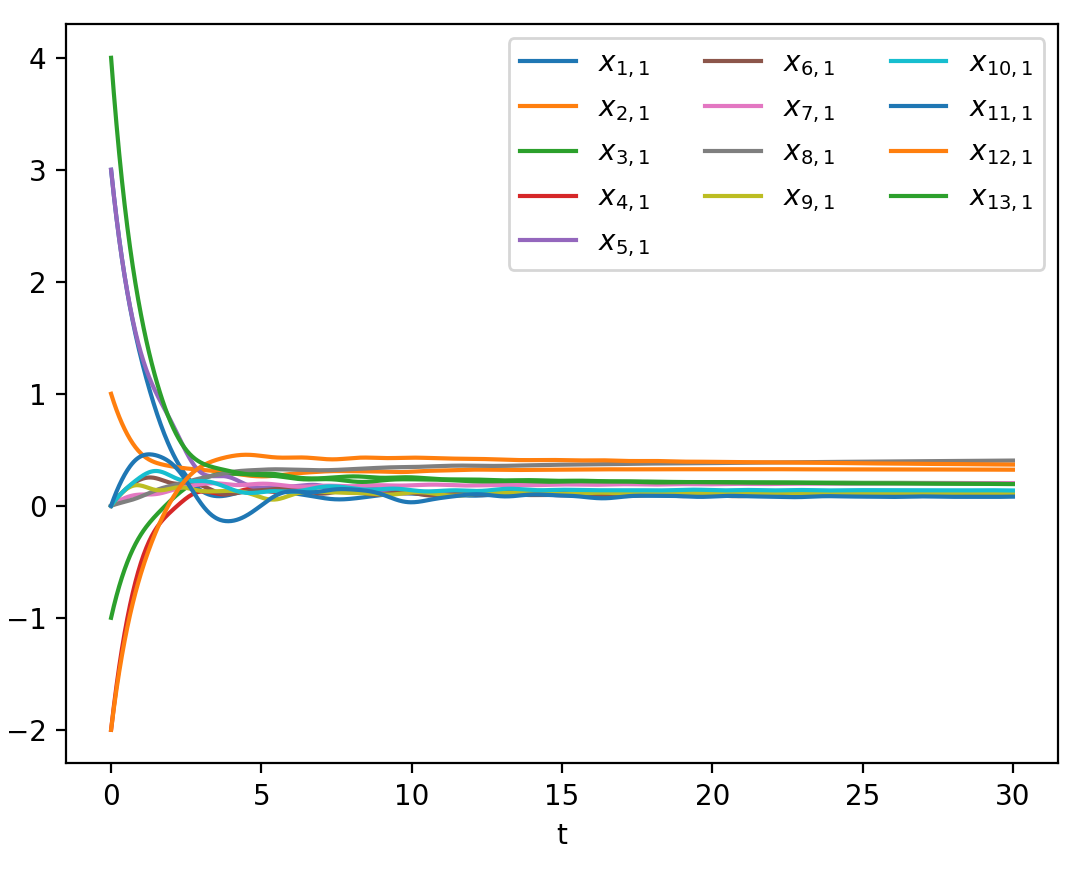}
  \\
  \includegraphics[width=0.3\textwidth]{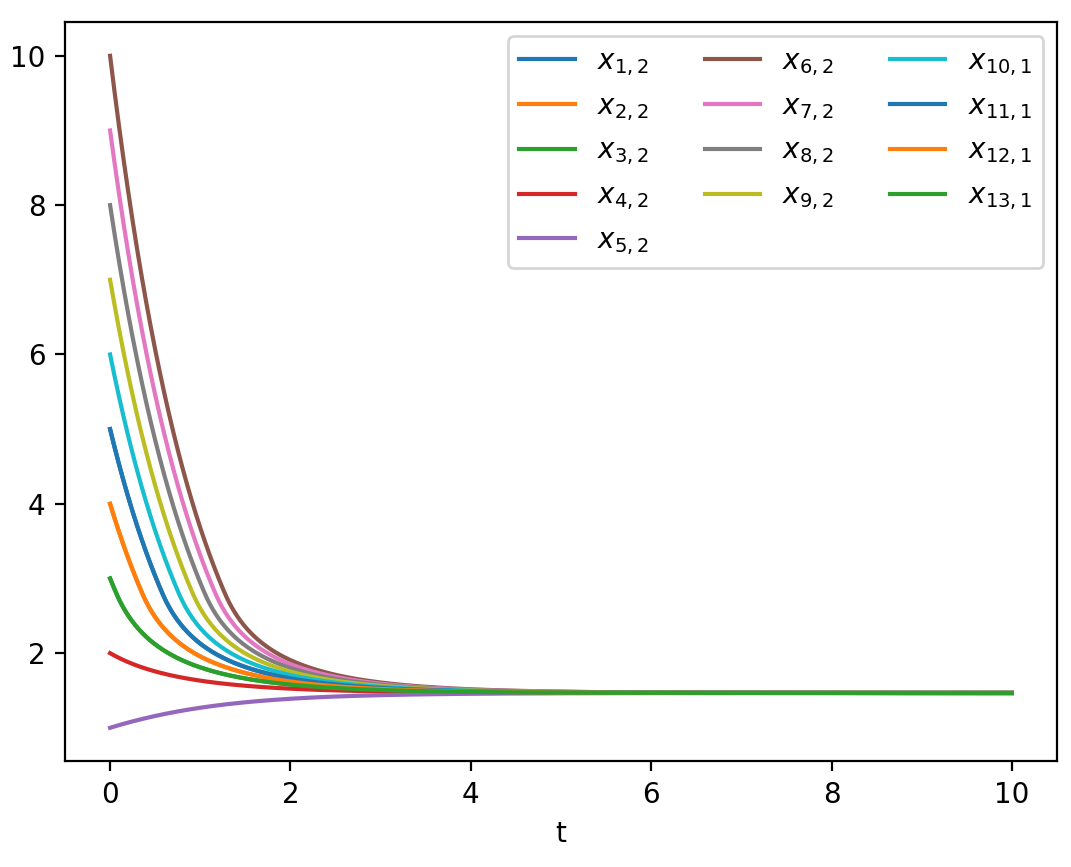}
  \caption{Evolution of $x_{1,i}$ (top) and $x_{2,i}$
    (bottom) for $i\in\until{13}$ under SP-SGF
    for~\eqref{eq:separable-example} with initial conditions
    $x_{1}=[3, 5]$, $x_{2}=[1, 4]$, $x_{3}=[-1, 3]$, $x_{4}=[-2, 2]$,
    $x_{5}=[3, 1]$, $x_{6}=[0, 10]$, $x_7=[0, 9]$, $x_8=[0, 8]$,
    $x_9=[0,7]$, $x_{10}=[0,6]$, $x_{11}=[0,5]$, $x_{12}=[-2,4]$, $x_{13}=[4,3]$, $v_{i,1}=v_{i,2}=z_{i}=y_{i}=\lambda_{i}=\mu_{i}=0$ for
    all $i\in\until{13}$ and $\tau=1$.}\label{fig:sim-variables-separable}
\end{figure}
%
%

\begin{figure}[htb]
  \centering
  \includegraphics[width=0.3\textwidth]{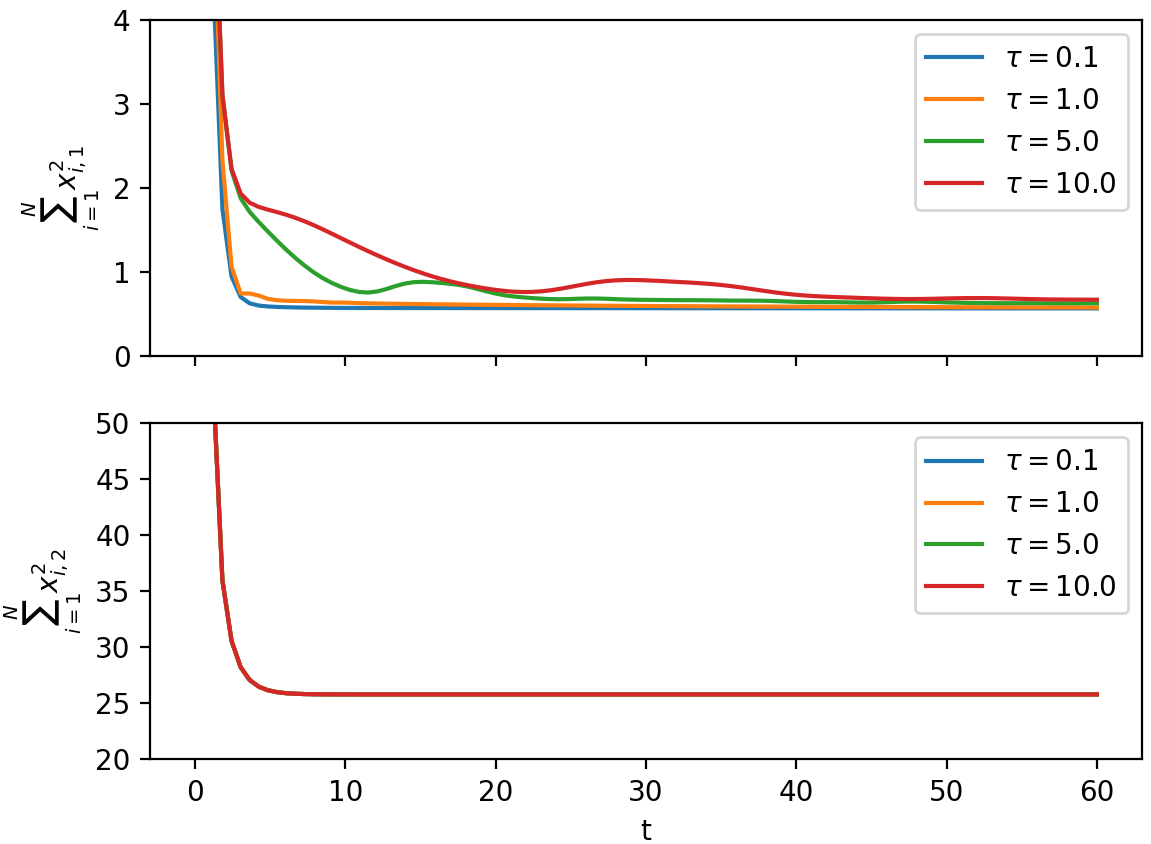}
  \caption{This plot shows the evolution of $\sum_{i=1}^{13}x_{i,1}^{2}$ and $\sum_{i=1}^{13}x_{i,2}^{2}$ under SP-SGF
    with initial conditions as in Figure~\ref{fig:sim-variables-separable}
    for different values of
    $\tau$.}\label{fig:x1-taus}
\end{figure}
%
%

\begin{figure}[htb]
  \centering
  \includegraphics[width=0.3\textwidth]{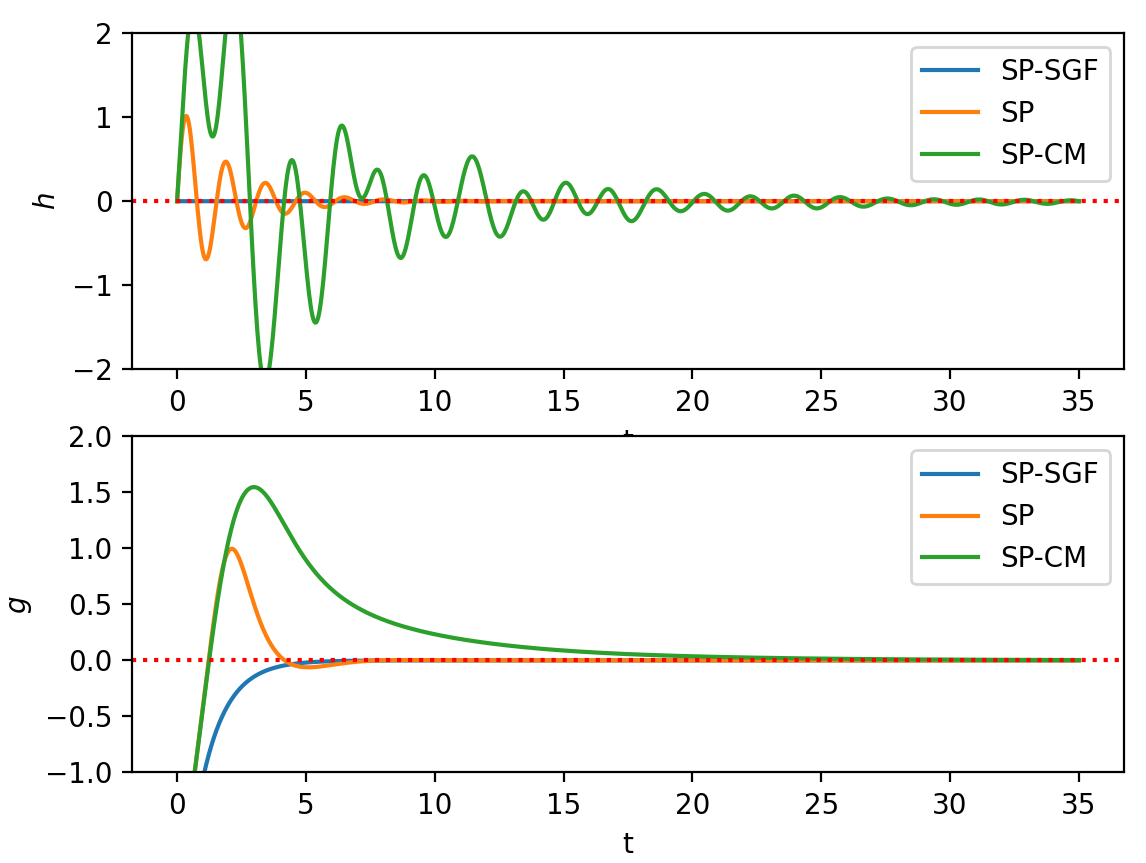}
  \caption{This plot shows the evolution of the constraints
    of~\eqref{eq:separable-example} for SP-SGF with the same initial
    conditions as in Figure~\ref{fig:sim-variables-separable}, SP with
    the same primal initial conditions as in
    Figure~\ref{fig:sim-variables-separable} and $\lambda=\mu=0$ and
    SP-CM with the same initial conditions as in
    Figure~\ref{fig:sim-variables-separable} for $x_{i}$, $z_{i}$,
    $y_{i}$, $\lambda_{i}$ and $\mu_{i}$ for
    $i\in\until{13}$.}\label{fig:sim-constraint-satisfaction-separable}
\end{figure}

\section{Conclusions}\label{sec:conclusions}
We have introduced a continuous-time dynamical system that solves
network optimization problems with separable objective function and
constraints in a distributed and anytime fashion. We have achieved
this by combining the projected saddle-point dynamics and the safe
gradient flow in a cascaded system. We have argued the scalable nature
of the algorithm execution from the point of view of individual agents
and established practical convergence to the optimizer when the
feasible is unbounded, and exact convergence when it is bounded.
Future work will consider other network optimization problems, refine
the convergence guarantees presented here and possibly design new
distributed, anytime algorithms, and investigate discretization
schemes for the continuous-time dynamics. We also plan to apply our
coordination algorithms in the implementation of optimization-based controllers arising from safety certificates for
multi-agent systems.

\bibliography{../../bib/alias,../../bib/JC,../../bib/Main-add,../../bib/Main} 
\bibliographystyle{IEEEtran}

%
%
%
%

\end{document}